\documentclass[aap,noinfoline]{imsart}
\usepackage{amsmath,amssymb,amsthm,booktabs}
\usepackage{mathrsfs}

\usepackage[utf8]{inputenc}
\usepackage[colorlinks,citecolor=blue,urlcolor=blue]{hyperref}

\usepackage{bm}\usepackage{euscript}
\usepackage{graphicx}

\usepackage{multicol}
\usepackage[usenames,dvipsnames,svgnames,table]{xcolor}

\usepackage[authoryear]{natbib}
\bibliographystyle{apalike}

\usepackage{a4wide}

\usepackage{mathrsfs}
\usepackage{euscript}

\startlocaldefs
\numberwithin{equation}{section} \theoremstyle{plain}
\newtheorem{theorem}{Theorem}[section]
\newtheorem{lemma}{Lemma}[section]
\newtheorem{corollary}{Corollary}[section]

\newtheorem{remark}{Remark}[section]
\endlocaldefs

\allowdisplaybreaks

\setattribute{journal}{name}{}

\begin{document}

\def\cB{\mathcal{B}}
\def\cD{\mathcal{D}}
\def\cE{\mathcal{E}}
\def\cF{\mathcal{F}}
\def\cH{\mathcal{H}}
\def\cN{\mathcal{N}}
\def\cS{\mathcal{S}}

\def\bC{\mathbb{C}}
\def\bD{\mathbb{D}}
\def\bR{\mathbb{R}}

\newcommand\ES{\EuScript{S}}

\def\blue{} \def\red{}
\begin{frontmatter}

  \title{High-dimensional limits of eigenvalue distributions for general Wishart process}
  
  \runtitle{High-dimensional limits of general Wishart processes}

  \begin{aug}
    \author{\fnms{~ Jian} \snm{Song}\ead[label=e1]{txjsong@hotmail.com}}
    \and
    \author{\fnms{~ Jianfeng} \snm{Yao~}\ead[label=e2]{jeffyao@hku.hk}}
    \and
    \author{\fnms{~ Wangjun} \snm{Yuan}\ead[label=e3]{ywangjun@connect.hku.hk}}
    
    \affiliation{Shandong University and  The University of Hong Kong}
    \runauthor{J. Song,   J. Yao \& W. Yuan}

    \address{School of Mathematics\\
      Shandong University\\
      \printead{e1}}
    
    \address{ 
      Department of Statistics and Actuarial Science\\
      The University of Hong Kong\\
      \printead{e2}
    }

    \address{
      Department of Mathematics \\
      The University of Hong Kong\\
      \printead{e3}
    }
  \end{aug}

  \begin{abstract}

In this article, we obtain an equation for the high-dimensional limit measure of eigenvalues of  generalized Wishart processes, and the results is extended to  random
particle systems  that  generalize SDEs of eigenvalues.  We also introduce a new set of conditions on
the coefficient matrices for the existence and uniqueness of a strong solution for the SDEs of eigenvalues. The equation of the limit measure is further discussed assuming self-similarity on the eigenvalues. 

  \end{abstract}
  
  \begin{keyword}[class=AMS]
    \kwd[Primary ]{60H15,~60F05}
  \end{keyword}

  \begin{keyword}
    \kwd{Dyson  Brownian  motion}
    \kwd{Eigenvalue distribution}
    \kwd{Generalized Wishart process}
    \kwd{High-dimensional limit}
    \kwd{Squared Bessel particle system}
    \kwd{Wishart process}
  \end{keyword}

\end{frontmatter}

\section{Introduction}\label{sec:intro}


While the theory of  stochastic differential equations (SDEs)
with values in a Euclidean space is quite well developed 
in stochastic analysis,  the study of  SDEs on general manifolds
is more recent. {{}}{In this paper, we consider the
  eigenvalue process  of the solution of a special class of
  matrix-valued SDEs as well as a more general class of particle systems introduced in \cite{Graczyk2014}.}
For ease of notation, let $\ES_N$ be the group of  $N\times N$  symmetric  matrices.
For $X\in \ES_N$ and $f$ a real-valued function, $f(X)\in\ES_N$
denotes  the matrix obtained from $X$ by acting $f$ on the spectrum of $X$. Namely, if $X$ has the spectral decomposition  $X=\sum_{j=1}^p \alpha_j u_ju_j^\intercal$ with eigenvalues $(\alpha_j)$ and eigenvectors $(u_j)$, then $f(X)=\sum_{j=1}^pf( \alpha_j) u_ju_j^\intercal$.  Here  $A^\intercal$ denotes the transpose of a matrix or  vector $A$.

{There is no much  work }  in the literature on SDEs with matrix state space $\ES_N$. 
We consider the class of so-called {\em generalized
  Wishart process}  which satisfies the following SDE on $\ES_N$
\begin{align} \label{SDE-matrix}
  dX_t^N = g_N(X_t^N) dB_t h_N(X_t^N) + h_N(X_t^N) dB_t^{\intercal}
  g_N(X_t^N) + b_N(X_t^N) dt, \quad t\ge 0.
\end{align}
Here  $B_t$ is a Brownian matrix of dimension $N \times N$, and the
functions $g_N, h_N, b_N : \mathbb{R} \rightarrow \mathbb{R}$ act on the
spectrum of $X_t^N$.
 Let 
\begin{equation}\label{eq-Gn}
G_N(x,y) = g_N^2(x) h_N^2(y) + g_N^2(y) h_N^2(x),
\end{equation} which is
symmetric with respect to $x$ and $y$.  Let  $\lambda_1^N(t) \le
\lambda_2^N(t) \le \cdots \le \lambda_N^N(t)$ be the eigenvalues of $X_t^N$. According to Theorem 3 in \citet{Graczyk2013}, {{}}{if $\lambda_1^N(0) < \lambda_2^N(0) < \cdots < \lambda_N^N(0)$, then} before the {{}}{first} collision time
{{}}{
\begin{align*}
	\tau_N = \inf\{t>0: \exists \ i \neq j, ~\lambda_i(t) = \lambda_j(t) \},
\end{align*}
}
the eigenvalues satisfy the following SDEs: for $1\le i \le N$,
{\small
\begin{align} \label{SDE-eigenvalue}
  d\lambda_i^N(t)
  &= 2g_N(\lambda_i^N(t)) h_N(\lambda_i^N(t)) dW_i(t) + \left( b_N(\lambda_i^N(t)) + \sum_{j:j\neq i} \dfrac{G_N(\lambda_i^N(t), \lambda_j^N(t))}{\lambda_i^N(t) - \lambda_j^N(t)} \right) dt.%
\end{align}}%
Here, $\{W_i, i=1,2,\dots, N\}$ are independent Brownian motions.
{In  \citet{Graczyk2013,Graczyk2014}, some other conditions on the functions
{were imposed to} ensure that \eqref{SDE-eigenvalue}  has a unique strong solution and the collision time is infinity almost surely.}

The generalized Wishart process \eqref{SDE-matrix} extends
the celebrated symmetric Brownian motion and Wishart process introduced respectively in \citet{Dyson62} and \citet{Bru1989}, as follows. 
\begin{itemize}
\item  If we take $g_N(x) = (2N)^{-1/2}$, $h_N(x) = 1$ and $b_N(x) =
  0$ in \eqref{SDE-matrix}, the random matrix $X_t^N$ becomes the
  symmetric Brownian motion with elements:
  \begin{equation}
    \label{eq:Dyson}
    X_t^N(i,j) = \frac1{\sqrt{N}} B_{t}(i,j) {\large\text{1}}_{\{i<j\}} +
    \frac{\sqrt 2}{\sqrt{N}} B_{t}(i,i) {\large\text{1}}_{\{i=j\}},\quad 1\le
    i\le j\le N,
  \end{equation}
  where $\{B_t(i,j), ~i\le j \}$ are independent Brownian motions. 
\item
  If we take 
  $g_N(x) = \sqrt{x}$, $h_N(x) = 1/\sqrt{N}$, and $b_N(x) =p/N$ with
  $p > N-1$ in \eqref{SDE-matrix}, then the random matrix
  $Y_t^N=NX_t^N$ is the Wishart process $B_t^\intercal B_t$, where
  $B_t$ is a $p \times N$ Brownian matrix.
\end{itemize}

Symmetric matrices appear in many scientific fields. Historically,
\citet{Dyson62} used symmetric  Brownian motions to analyse  the Hamiltonian of a complex
nucleic system in particle  physics.
\citet{Bru1989} introduced her Wishart process to perform  principal 
component analysis on a set of  resistance data of Escherichia Coli to
certain antibiotics.
More recently, time series of positive definite  matrices are
particularly important in the following fields. 

\begin{enumerate}
\item Financial data analysis:  multivariate volatility/co-volatility (variance/covariance) between stock returns or
  interest rates from different markets have been studied recently
  through Wishart processes, see \citet{Gourieroux06},
  \citet{Gourieroux10a},  \citet{DaFonseca08},
  \citet{DaFonseca14},   \citet{Gnoatto12}, \citet{Gnoatto14} and \citet{Wu18}.
  
\item Machine learning: an important task in machine learning using
  kernel functions is the determination of  a suitable kernel matrix for a given
  data analysis problem (\citet{Scholkopf02}).  Such determination is
  referred as the  {\em kernel matrix learning problem}.
  A kernel   matrix is in fact a positive definite Gram-matrix of size
  $N\times N$ where  $N$, the sample size of the data, is usually
  large.
  An innovative method for kernel learning is proposed by
  \citet{Zhang06}
  where unknown kernel matrix is modeled by 
  a Wishart process prior. This approach has been followed 
  in \citet{Kondor07} and \citet{Li09}.
  
\item  Computer vision:  real-time computer vision often involves
  tracking of objects of interest. At each time $t$, a target is
  encoded
  into a $N$-dimensional vector $a_t\in\bR^N $ (feature vector).  It is therefore
  clear that measuring ``distance'' between these vectors, say 
  $a_t$ and $a_{t+dt}$  at  two consecutive {time spots} $t$ and
  $t+dt$, is of
  crucial importance for object tracking. Because the standard
  Euclidean distance $\|a_{t+dt} -a_{t}\|^2$ is rarely optimal, it is more
  satisfactory to  identify a better metric of the form
  $ (a_{t+dt} -a_{t})^\intercal M_t  (a_{t+dt}  -a_{t})$ using a
  suitable positive definite matrix $M_t$.
  Again, the sequence of metric matrices  $(M_t)$ is time
  varying; it should be data-adaptive, estimable from data available
  at time $t$. An innovative solution is proposed in \citet{Li16}
  where $M_t$ follows a Wishart process.
\end{enumerate} 

Motivated by these recent applications where the dimension $N$ of 
a matrix process is usually large,  we study in this paper
{\em high-dimensional  limits}
of eigenvalue distributions of the generalized Wishart process
\eqref{SDE-matrix} {as} $N$ tends to infinity. To the best of our
knowledge, such high-dimensional limits are known in the literature
only for
{{}}{some simple cases. An early result is the derivation of
  the Wigner semi-circle law from the eigenvalue empirical measure
  process in \cite{Chan1992} where the symmetric matrix process  has
  independent Ornstein-Uhlenbeck processes as its entries. The results were later  generalized in \cite{Rogers1993} to the following SDEs, 
  \begin{align*}
	dX_j = \sqrt{\dfrac{2\alpha}{N}} dB_j + \left( - \theta X_j + \dfrac{\alpha}{N} \sum_{j:j\neq i} \dfrac{1}{X_i - X_j} \right) dt, \quad 1 \le i \le N, t \ge 0.
  \end{align*}
   \cite{Cepa1997} further generalised  these SDEs to 
   \begin{align*}
     dX_j = \sigma(X_j) dB_j + \left( b(X_j) + \sum_{j:j\neq i} \dfrac{\gamma}{X_i - X_j} \right) dt, \quad 1 \le i \le N, t \ge 0,
\end{align*}
with some coefficient functions $b$, $\sigma$ and constant $\gamma$.
Another important case is the Mar\v{c}enko-Pastur law for 
the eigenvalue empirical measure process derived in 
\cite{Duvillard2001}. The eigenvalues SDEs \eqref{SDE-eigenvalue}
considered in the present  paper generalises the eigenvalue SDEs in
\cite{Chan1992} and \cite{Duvillard2001}, as well as the particle
system in \cite{Rogers1993}. Also  the particle system
\eqref{SDE-particle}
in Section~\ref{sec:particles} which is introduced in \cite{Graczyk2014}
generalizes the particle system in \cite{Cepa1997}.
}

The rest of the paper is organized as follows. In
Section~\ref{sec:eig-val}, we study high-dimensional limits of
eigenvalue distributions of the  generalized Wishart process
\eqref{SDE-matrix}.
In Section~\ref{sec:particles}, our results are extended to a random
particle system  that  generalizes the eigenvalue SDEs~\eqref{SDE-eigenvalue}.
These results from the two sections presuppose that these SDEs have a
unique strong solution (before colliding/exploding time). In
Section~\ref{sec:existence}, we introduce a new set of conditions on
the coefficient matrices in {{}}{\eqref{SDE-eigenvalue} and its generalization, the particle system \eqref{SDE-particle}}  (here the dimension $N$ is fixed). These conditions are thus compared with the ones proposed in
\citet{Graczyk2013,Graczyk2014}. In Section \ref{sec: discussion},
assuming self-similarity on the eigenvalues, we simplify the equation
\eqref{limit point equation 4.3.25} of the limit measure and indicate
its connection with the Hilbert transform operator.

\section{Limit point of empirical measure for eigenvalues}\label{sec:eig-val}

We denote by $M_1(\mathbb{R})$ the set of probability measures on $\mathbb{R}$. Since a probability measure can be viewed as a continuous linear functional on the space $C_b(\mathbb{R})$ of bounded continuous functions,  $M_1(\mathbb{R})$ is a subset of the dual space $C_b(\mathbb{R})^*$ of $C_b(\mathbb R)$. Since the space $C_b(\mathbb{R})$ endowed with the sup norm is a normable space, its dual $C_b(\mathbb{R})^*$ is a Banach space with the dual norm. The space $M_1(\mathbb{R})$  with the norm inherited from the dual norm of $C_b(\mathbb{R})^*$ is complete.  Besides, the space $C([0,T], M_1(\mathbb{R}))$  endowed with the metric
\begin{align*}
	d_{C([0,T], M_1(\mathbb{R}))} (f_1, f_2) = \sup_{t\in[0,T]} d_{M_1(\mathbb{R})} \left( f_1(t), f_2(t) \right),
\end{align*}
is complete.  

 Consider the empirical measure of the eigenvalues $\lambda_i^N(t)$ satisfying \eqref{SDE-eigenvalue}
\begin{align}\label{eq-lnt}
	L_N(t) = \dfrac{1}{N} \sum_{i=1}^N \delta_{\lambda_i^N(t)}.
\end{align}
We shall study the limit point of $L_N$ in the space $C([0,T], M_1(\mathbb{R}))$, as $N$ goes to infinity, and we assume the following conditions. 
\begin{itemize}
\item[(A)] There exists a positive function $\varphi(x) \in C^2(\mathbb{R})$ such that
$\lim\limits_{|x|\rightarrow +\infty} \varphi(x) = +\infty,$
 $\varphi'(x)b_N(x)$ is bounded with respect to $(x,N)$, and $\varphi'(x)g_N(x)h_N(x)$ satisfies
\begin{align*}
	\sum_{N=1}^{\infty} \left( \dfrac{\|\varphi' g_N h_N\|_{L^{\infty}(dx)}^2}{N} \right)^{l_1} < \infty,
\end{align*}
for some positive integer $l_1$. 

\item[(B)]  The function
$N G_N(x,y) \dfrac{\varphi'(x) - \varphi'(y)}{x-y}$
is bounded with respect to $(x,y, N)$. 

 \item[(C)] 
\begin{align}\label{eq-C0}
	C_0 = \sup_{N>0} \langle \varphi, L_N(0) \rangle
	= \sup_{N>0} \dfrac{1}{N} \sum_{i=1}^N \varphi \left( \lambda_i^N(0) \right) < \infty.
\end{align}
\item[(D)] There exists a sequence $\{\tilde{f}_k\}_{k\in \mathbb N}$ of $C^2(\bR)$  functions such that it is dense in the space $C_0(\mathbb{R})$ of continuous functions vanishing at infinity and  that $\tilde{f}_k'(x) g_N(x) h_N(x)$ satisfies
\begin{align}\label{eq-psi}
	\psi(k) = \sum_{N=1}^{\infty} \left( \dfrac{\|\tilde{f}_k' g_N h_N\|_{L^{\infty}(dx)}^2}{N} \right)^{l_2} < \infty
\end{align}
for some positive integer $l_2 \ge 2$.
\end{itemize}

\begin{remark}
When one chooses the function $\varphi(x)$ in condition (A), although
$\varphi(x)$ goes to  $\infty$   as $ |x|$ goes to $\infty$, one
should expect that the first and second derivatives of $\varphi$
vanish fast enough. One typical choice is  $\varphi(x) = \ln (1+x^2)$.

 Condition (B) implies that 
\begin{align*}
	N G_N(x,x) \varphi''(x)
	= \lim_{y \rightarrow x} N G_N(x,y) \dfrac{\varphi'(x) - \varphi'(y)}{x-y}
\end{align*}
is uniformly bounded with respect to $(x,N)$, and so is $Ng_N^2(x)h_N^2(x)\varphi''(x).$
\end{remark}

\begin{remark}\label{rmk2.2}
Suppose that $b_N(x) \le c_b|x|$, $g_N^2(x) \le c_g |x| N^{-\alpha}$ and $h_N^2(x) \le c_h |x| N^{-\beta}$ for large $N$ and large $|x|$ with constants $c_b, c_g, c_h$ and $\alpha + \beta \ge 1$, then we can choose $\varphi(x) = \ln(1 + x^2)$ to satisfy the above conditions (A), (B) and (D).
\end{remark}

\begin{theorem}\label{thm1}
 Let $T>0$ be a fixed number. {Suppose that
   \eqref{SDE-eigenvalue} has a strong solution that is non-exploding
   and non-colliding for  $t \in [0,T]$.} Then under the conditions (A), (B), (C) and (D),  the sequence $\{L_N(t), t\in[0,T]\}_{N\in \mathbb N}$ is relatively compact in $C([0,T], M_1(\mathbb{R}))$, i.e., every subsequence has a further subsequence that converges in $C([0,T], M_1(\mathbb{R}))$ almost surely.
\end{theorem}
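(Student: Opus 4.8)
The plan is to deduce relative compactness from an Arzel\`a--Ascoli type criterion in $C([0,T], M_1(\mathbb{R}))$: it suffices to show (i) that all the paths $L_N(t)$ take values, uniformly in $N$ and almost surely in $t\in[0,T]$, inside a fixed compact subset $K\subset M_1(\mathbb{R})$, and (ii) that the family $\{t\mapsto L_N(t)\}_N$ is uniformly equicontinuous almost surely. Since $M_1(\mathbb{R})$ is metrized through the dual norm inherited from $C_b(\mathbb{R})^*$, both properties can be accessed by controlling the scalar processes $\langle f, L_N(t)\rangle = N^{-1}\sum_{i=1}^N f(\lambda_i^N(t))$ for $f$ ranging over the Lyapunov function $\varphi$ of condition (A) and the countable dense family $\{\tilde f_k\}$ of condition (D). The dynamics of these processes come from applying It\^o's formula to \eqref{SDE-eigenvalue}.

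First I would compute, for $f\in C^2(\mathbb{R})$, using $d\langle\lambda_i^N\rangle_t = 4 g_N^2(\lambda_i^N)h_N^2(\lambda_i^N)\,dt$, the semimartingale decomposition
\begin{align*}
\langle f, L_N(t)\rangle = \langle f, L_N(0)\rangle + M_N^f(t) + \int_0^t \langle \Phi_N^f, L_N(s)\rangle\,ds,
\end{align*}
where $M_N^f(t) = \frac{2}{N}\sum_{i=1}^N \int_0^t f'(\lambda_i^N)g_N(\lambda_i^N)h_N(\lambda_i^N)\,dW_i$ is a martingale with
\begin{align*}
\langle M_N^f\rangle_t \le \frac{4t}{N}\,\|f' g_N h_N\|_{L^\infty(dx)}^2,
\end{align*}
and the drift $\Phi_N^f$ collects the term $f' b_N$, the second-order term $2 g_N^2 h_N^2 f''$, and the singular interaction $N^{-1}\sum_i f'(\lambda_i^N)\sum_{j\neq i} G_N(\lambda_i^N,\lambda_j^N)/(\lambda_i^N-\lambda_j^N)$. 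The key manipulation is to symmetrize the last double sum, using that $G_N$ is symmetric in $(x,y)$, into
\begin{align*}
\frac{1}{2N}\sum_{i\neq j}\frac{f'(\lambda_i^N)-f'(\lambda_j^N)}{\lambda_i^N-\lambda_j^N}\,G_N(\lambda_i^N,\lambda_j^N),
\end{align*}
whose divided difference is bounded whenever $f''$ is bounded; this removes the apparent singularity at eigenvalue coincidences and explains why only $C^2$ test functions are admitted.

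For spatial tightness I would take $f=\varphi$. Condition (A) bounds $\varphi' b_N$ and (via the Remark) $N g_N^2 h_N^2\varphi''$, while condition (B) bounds the symmetrized interaction through $N G_N(x,y)(\varphi'(x)-\varphi'(y))/(x-y)$; hence the drift of $\langle\varphi,L_N(t)\rangle$ is uniformly bounded. Together with condition (C), the quadratic-variation bound above, and the $l_1$-summability of $\|\varphi' g_N h_N\|_{L^\infty}^2/N$ in (A), a Doob maximal inequality followed by Borel--Cantelli yields $\sup_N\sup_{t\le T}\langle\varphi,L_N(t)\rangle<\infty$ almost surely; since $\varphi\to+\infty$ at infinity, this confines every $L_N(t)$ to a fixed compact $K\subset M_1(\mathbb{R})$. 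For temporal equicontinuity I would, for each $\tilde f_k$, bound $\Phi_N^{\tilde f_k}$ uniformly in $N$ (again from (A), (B) and the boundedness of $\tilde f_k', \tilde f_k''$), making the drift increments Lipschitz in time, and control the martingale increments by Burkholder--Davis--Gundy,
\begin{align*}
\mathbb{E}\,\big|M_N^{\tilde f_k}(t)-M_N^{\tilde f_k}(s)\big|^{2l_2} \lesssim \Big(|t-s|\,N^{-1}\,\|\tilde f_k' g_N h_N\|_{L^\infty(dx)}^2\Big)^{l_2}.
\end{align*}
A Kolmogorov continuity estimate together with the $l_2$-summability $\psi(k)<\infty$ from \eqref{eq-psi}, summed over $N$ and passed through Borel--Cantelli, produces an almost sure uniform-in-$N$ modulus of continuity for each $\langle\tilde f_k,L_N(\cdot)\rangle$.

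Combining (i) and (ii), a diagonal extraction over the dense family $\{\tilde f_k\}$ then delivers almost sure relative compactness in $C([0,T],M_1(\mathbb{R}))$. I expect the main obstacle to be twofold. The first is taming the singular interaction term, which is precisely handled by the symmetrization together with condition (B); the second, more delicate, is upgrading ordinary in-distribution tightness to \emph{almost sure} relative compactness, which is what forces the quantitative summability hypotheses. The technical heart is therefore arranging the moment exponents $l_1,l_2$ so that the Borel--Cantelli series converge while the Doob, BDG, and Kolmogorov estimates stay sharp enough to cover both the escape-of-mass control and the time-regularity simultaneously.
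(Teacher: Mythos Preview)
Your proposal is correct and follows essentially the same approach as the paper: It\^o's formula plus symmetrization of the interaction term, the Lyapunov function $\varphi$ for spatial tightness, BDG moment bounds on the martingale increments for temporal equicontinuity, and Borel--Cantelli to upgrade to almost sure relative compactness via an Arzel\`a--Ascoli criterion in $C([0,T],M_1(\mathbb{R}))$. The paper carries out the equicontinuity step by a direct dyadic partition of $[0,T]$ rather than invoking Kolmogorov's lemma, and it packages the final compactness via an explicitly constructed compact set $\mathcal{H}_M\subset C([0,T],M_1(\mathbb{R}))$ (citing a lemma from \cite{Anderson2009}), but these are presentational rather than substantive differences.
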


\begin{proof}
We split the proof into three steps for the reader's convenience. 

{\bf Step 1.} In this step, we apply It\^o's formula to estimate $\langle f, L_N(t) \rangle$ for $f\in C^2(\mathbb R)$. 

Note that
\begin{align*}
	\langle f, L_N(t) \rangle
	= \int f(x) L_N(t)(dx)
	= \dfrac{1}{N} \sum_{i=1}^N \int f(x) \delta_{\lambda_i^N(t)}(dx)
	= \dfrac{1}{N} \sum_{i=1}^N f(\lambda_i^N(t)).
\end{align*}
By  It\^{o}'s formula and \eqref{SDE-eigenvalue},
\begin{align*}
	f(\lambda_i^N(t))
	&= f(\lambda_i^N(0)) + \int_{0}^t f'(\lambda_i^N(s)) d\lambda_i^N(s) + \dfrac{1}{2} \int_{0}^t f''(\lambda_i^N(s)) d \langle \lambda_i^N \rangle_s \\
	&= f(\lambda_i^N(0)) + 2\int_{0}^t f'(\lambda_i^N(s)) g_N(\lambda_i^N(s)) h_N(\lambda_i^N(s)) dW_i(s) + \int_{0}^t f'(\lambda_i^N(s)) b_N(\lambda_i^N(s)) ds \\
	&\quad + \int_{0}^t f'(\lambda_i^N(s)) \sum_{j:j\neq i} \dfrac{G_N (\lambda_i^N(s), \lambda_j^N(s))}{\lambda_i^N(s) - \lambda_j^N(s)} ds + 2\int_{0}^t f''(\lambda_i^N(s)) g_N^2(\lambda_i^N(s)) h_N^2(\lambda_i^N(s)) ds.
\end{align*}
Thus,
\begin{align} \label{Ito formula}
	\langle f, L_N(t) \rangle
	&= \dfrac{1}{N} \sum_{i=1}^N f(\lambda_i^N(0)) + \dfrac{2}{N} \sum_{i=1}^N \int_{0}^t f'(\lambda_i^N(s)) g_N(\lambda_i^N(s)) h_N(\lambda_i^N(s)) dW_i(s) \nonumber \\
	&\quad + \dfrac{1}{N} \sum_{i=1}^N \int_{0}^t f'(\lambda_i^N(s)) b_N(\lambda_i^N(s)) ds + \dfrac{1}{N} \sum_{i\neq j} \int_{0}^t f'(\lambda_i^N(s)) \dfrac{G_N (\lambda_i^N(s), \lambda_j^N(s))}{\lambda_i^N(s) - \lambda_j^N(s)} ds \nonumber \\
	&\quad + \dfrac{2}{N} \sum_{i=1}^N \int_{0}^t f''(\lambda_i^N(s)) g_N^2(\lambda_i^N(s)) h_N^2(\lambda_i^N(s)) ds\notag\\
	&= \langle f, L_N(0) \rangle + M_f^N(t)+ \int_{0}^t \langle f'b_N, L_N(s) \rangle ds+2\int_{0}^t \langle f''g_N^2h_N^2, L_N(s) \rangle ds\notag\\
	&\quad +\dfrac{1}{N} \sum_{i\neq j} \int_{0}^t f'(\lambda_i^N(s)) \dfrac{G_N (\lambda_i^N(s), \lambda_j^N(s))}{\lambda_i^N(s) - \lambda_j^N(s)} ds, 
\end{align}
where
\begin{align} \label{martingale definition}
	M_f^N(t) = \dfrac{2}{N} \sum_{i=1}^N \int_{0}^t f'(\lambda_i^N(s)) g_N(\lambda_i^N(s)) h_N(\lambda_i^N(s)) dW_i(s)
\end{align}
is a local martingale.

{\red
In the following, we adopt the convention that 	$\frac{f'(x) - f'(y)}{x-y} = f''(x)$ on $\{x=y\}$. We omit the integral domain when it is $\bR$. We also omit the domain of the double integral when it is $\bR^2$.
}

By changing the index in the sum and using the symmetry, the last term in \eqref{Ito formula} can be simplified as follows,
\begin{align*}
	&\quad \dfrac{1}{N} \sum_{i\neq j} \int_{0}^t f'(\lambda_i^N(s)) \dfrac{G_N (\lambda_i^N(s), \lambda_j^N(s))}{\lambda_i^N(s) - \lambda_j^N(s)} ds \\
	&= \dfrac{1}{2N} \sum_{i\neq j} \int_{0}^t \dfrac{f'(\lambda_i^N(s)) - f'(\lambda_j^N(s))}{\lambda_i^N(s) - \lambda_j^N(s)} G_N (\lambda_i^N(s), \lambda_j^N(s)) ds \\
	&= \dfrac{1}{2N} \sum_{i\neq j} \int_{0}^t \iint \dfrac{f'(x) - f'(y)}{x - y} G_N (x, y) \delta_{\lambda_i^N(s)}(dx) \delta_{\lambda_j^N(s)}(dy) ds \\
	&= \dfrac{N}{2} \int_{0}^t \iint \dfrac{f'(x) - f'(y)}{x - y} G_N (x, y) L_N(s)(dx) L_N(s)(dy) ds \\
	&\quad - \dfrac{1}{2N} \sum_{i=1}^N \int_{0}^t \iint \dfrac{f'(x) - f'(y)}{x - y} G_N (x, y) \delta_{\lambda_i^N(s)}(dx) \delta_{\lambda_i^N(s)}(dy) ds,
\end{align*}
Hence, the second term on the right-hand side of the above equation can be simplified as
\begin{align*}
	&\quad \dfrac{1}{2N} \sum_{i=1}^N \int_{0}^t \iint \dfrac{f'(x) - f'(y)}{x - y} G_N (x, y) \delta_{\lambda_i^N(s)}(dx) \delta_{\lambda_i^N(s)}(dy) ds \\
	&= \dfrac{1}{2N} \sum_{i=1}^N \int_{0}^t f''(\lambda_i^N(s)) G_N(\lambda_i^N(s), \lambda_i^N(s)) ds \\
	&= \dfrac{1}{N} \sum_{i=1}^N \int_{0}^t f''(\lambda_i^N(s)) g_N^2(\lambda_i^N(s)) h_N^2(\lambda_i^N(s)) ds \\
	&= \int_{0}^t \langle f''g_N^2h_N^2, L_N(s) \rangle ds.
\end{align*}
Therefore, \eqref{Ito formula} becomes
\begin{align} \label{pair formula 4.3.26}
	\langle f, L_N(t) \rangle
	&= \langle f, L_N(0) \rangle + M_f^N(t) + \int_{0}^t \langle f'b_N, L_N(s) \rangle ds + \int_{0}^t \langle f''g_N^2h_N^2, L_N(s) \rangle ds \nonumber \\
	&\quad + \dfrac{N}{2} \int_{0}^t \iint \dfrac{f'(x) - f'(y)}{x - y} G_N (x, y) L_N(s)(dx) L_N(s)(dy) ds.
\end{align}

Now we assume the boundedness of the following terms,  $\sup\limits_N\left| \langle f, L_N(0) \rangle \right|, \sup\limits_{x,N}|f'(x)b_N(x)| $, \\ $\sup\limits_x|f'(x) g_N(x) h_N(x)|$, $\sup\limits_{x,N}|f''(x) g_N^2(x) h_N^2(x)|$    and	$\sup\limits_{x,y, N}|N G_N(x,y) \frac{f'(x) - f'(y)}{x-y}|$.   Note that  the above assumption is satisfied by the function $\varphi$ appearing in conditions (A), (B) and (C).  

Now the quadratic variation of the local martingale $M_f^N(t)$ has the following estimation
\begin{align} \label{martingale estimation}
	\langle M_f^N \rangle_t
	&= \dfrac{4}{N^2} \sum_{i=1}^N \int_{0}^t \left|f'(\lambda_i^N(s)) g_N(\lambda_i^N(s)) h_N(\lambda_i^N(s))\right|^2 ds \nonumber \\
	&= \dfrac{4}{N} \int_{0}^t \langle |f' g_N h_N|^2, L_N(s) \rangle ds \nonumber \\
		&\le \dfrac{4T}{N} \|f' g_N h_N\|_{L^{\infty}(dx)}^2.
\end{align}
{Thus, $M_f^N(t)$  is a martingale.}

By \eqref{pair formula 4.3.26},  we have
\begin{align}\label{eq7}
	\sup_{t\in[0,T]} |\langle f, L_N(t) \rangle|
	\le \sup_{N>0}|\langle f, L_N(0) \rangle| + \sup_{t\in[0,T]} |M_f^N(t)| + D_0T,
\end{align}
where
\begin{align}\label{eq-D0}
	D_0 = \sup_{N>0} \Bigg\{&\|f' b_N\|_{L^{\infty}(dx)} + \|f'' g_N^2 h_N^2\|_{L^{\infty}(dx)}\notag\\
	 &\quad + \dfrac{1}{2} \left\| N G_N(x,y) \dfrac{f'(x) - f'(y)}{x-y} \right\|_{L^{\infty}(dxdy)} \Bigg\}.
\end{align}

Fix $l \in \mathbb{N}$. By Markov inequality, Burkholder-Davis-Gundy inequality and \eqref{martingale estimation}, there exists a positive constant $\Lambda_l$ depending on $l$ such that for any $\varepsilon>0$,
\begin{align} \label{eq11}
	&\mathbb{P} \left( \sup_{t\in[0,T]} \left| M_f^N(t) \right| \ge \varepsilon \right)
	\le \dfrac{1}{\varepsilon^{2l}} \mathbb{E} \left[ \sup_{t\in[0,T]} \left| M_f^N(t) \right|^{2l} \right]\notag\\
	\le& \dfrac{\Lambda_l}{\varepsilon^{2l}} \mathbb{E} \left[ \langle M_f^N \rangle_T^l \right]
	\le \dfrac{4^l T^l \Lambda_l}{N^l\varepsilon^{2l}} \|f' g_N h_N\|_{L^{\infty}(dx)}^{2l}.
\end{align}
Hence, for  $M > \sup\limits_{N>0}|\langle f, L_N(0) \rangle| + D_0T$, it follows from \eqref{eq7} and \eqref{eq11} that
\begin{align} \label{uniform bounded 4.3.31}
	&\mathbb{P} \left( \sup_{t\in[0,T]} \left| \langle f, L_N(t) \rangle \right| \ge M \right)\notag\\
	\le&~ \mathbb{P} \left( \sup_{t\in[0,T]} \left| M_f^N(t) \right| \ge M - C_0T - \sup_{N>0}|\langle f, L_N(0) \rangle| \right) \nonumber \\
	\le& ~\dfrac{4^l T^l \Lambda_l}{N^l (M - D_0T - \sup\limits_{N>0}|\langle f, L_N(0) \rangle|)^{2l}} \|f' g_N h_N\|_{L^{\infty}(dx)}^{2l}.
\end{align}

{\bf Step 2.} Now we study the H\"older continuity of $\langle f, L_N(t) \rangle$.  

For $t \ge s$,  \eqref{pair formula 4.3.26} implies 
\begin{align*}
	\langle f, L_N(t) \rangle - \langle f, L_N(s) \rangle
	&= M_f^N(t) - M_f^N(s) + \int_s^t \langle f'b_N, L_N(u) \rangle du + \int_s^t \langle f''g_N^2h_N^2, L_N(u) \rangle du \\
	&\quad + \dfrac{N}{2} \int_s^t \iint \dfrac{f'(x) - f'(y)}{x - y} G_N (x, y) L_N(u)(dx) L_N(u)(dy) du.
\end{align*}
Hence,
\begin{align*}
	|\langle f, L_N(t) \rangle - \langle f, L_N(s) \rangle|
	&\le |M_f^N(t) - M_f^N(s)| + (t-s) \|f'b_N\|_{L^{\infty}(dx)} + (t-s) \|f''g_N^2h_N^2\|_{L^{\infty}(dx)} \\
	&\quad + \dfrac{t-s}{2} \left\| N \dfrac{f'(x) - f'(y)}{x - y} G_N (x, y) \right\|_{L^{\infty}(dxdy)} \\
	&\le |M_f^N(t) - M_f^N(s)| + (t-s)D_0,
\end{align*}
where $D_0$ is given in \eqref{eq-D0}. Note that $[0,T]$ can be partitioned into small intervals of length $\eta < D_0^{-8/7}$ and the number of the intervals are $J = [T\eta^{-1}]$. Then by  Markov inequality, Burkholder-Davis-Gundy inequality and \eqref{martingale estimation}, we have
\begin{align*}
	&\mathbb{P} \left( \sup_{|t-s|\le \eta} \left|M_f^N(t) - M_f^N(s) \right| \ge M\eta^{1/8} \right)\\
	\le &\sum_{k=0}^J \mathbb{P} \left( \sup_{k\eta \le t \le (k+1)\eta} \left|M_f^N(t) - M_f^N(k\eta) \right| \ge \dfrac{M\eta^{1/8}}{3} \right) \\
	\le& \sum_{k=0}^J \dfrac{3^{2l}}{M^{2l} \eta^{l/4}} \mathbb{E} \left[ \sup_{k\eta \le t \le (k+1)\eta} \left|M_f^N(t) - M_f^N(k\eta) \right|^{2l} \right] \\
	\le& \sum_{k=0}^J \dfrac{3^{2l} \Lambda_l}{M^{2l} \eta^{l/4}} \mathbb{E} \left[ \langle M_f^N(k\eta+\cdot) - M_f^N(k\eta) \rangle_{\eta}^l \right] \\
	\le& \sum_{k=0}^J \dfrac{6^{2l} \Lambda_l \eta^{3l/4}}{M^{2l} N^l} \|f' g_N h_N\|_{L^{\infty}(dx)}^{2l} \\
\le & \eta^{3l/4-1}\cdot\dfrac{6^{2l} \Lambda_l T }{M^{2l} N^l} \|f' g_N h_N\|_{L^{\infty}(dx)}^{2l}.
\end{align*}
Hence, noting that $\eta D_0<\eta^{1/8},$ we have
\begin{align} \label{equicontinuous estmation 4.3.32}
	&\quad \mathbb{P} \left( \sup_{|t-s|\le \eta} |\langle f, L_N(t) \rangle - \langle f, L_N(s) \rangle| \ge (M+1) \eta^{1/8} \right) \nonumber \\
	&\le \mathbb{P} \left( \sup_{|t-s|\le \eta} \left|M_f^N(t) - M_f^N(s) \right| \ge (M+1) \eta^{1/8} - \eta D_0 \right) \nonumber \\
	&\le \mathbb{P} \left( \sup_{|t-s|\le \eta} \left|M_f^N(t) - M_f^N(s) \right| \ge M \eta^{1/8} \right) \nonumber \\
	&\le \eta^{3l/4-1}\cdot \dfrac{6^{2l} \Lambda_l  T}{M^{2l} N^l} \|f' g_N h_N\|_{L^{\infty}(dx)}^{2l}.
\end{align}

{\bf Step 3.} In this last step, we obtain the relative compactness of  $\{L_N\}_{N \in \mathbb{N}^+}$ and conclude the proof. 

 Let  $M$ denote a generic positive constant that may vary in different places.  Recalling that $\varphi$ is given in condition (A), we set
\begin{align*}
	K(\varphi, M) =	\left\{ \mu \in M_1(\mathbb{R}): \langle \varphi, \mu \rangle = \int \varphi(x) \mu(dx) \le M+1 \right\}.
\end{align*}
Since $\varphi(x)$ is positive and tends to infinity as $|x| \rightarrow +\infty$, $K(\varphi, M)$ is tight, i.e. it is (sequentially) compact in $M_1(\mathbb{R})$.

By Arzela-Ascoli Lemma, the  set 
\begin{align*}
	& C_M(\{\varepsilon_n\}, \{\eta_n\})\\
		= &\bigcap_{n=1}^{\infty} \left\{ g \in C([0,T], \mathbb{R}): \sup_{|t-s|\le \eta_n} |g(t) - g(s)| \le \varepsilon_n, \sup_{t\in[0,T]} |g(t)| \le M \right\},
\end{align*}
where $\{\varepsilon_n\}$ and $\{\eta_n\}$ are two positive sequences converging to $0$, 
 is (sequentially) compact in $C([0,T],\mathbb 
 R)$. For $\varepsilon>0$ and a bounded function $\tilde{f} \in C^2(\mathbb{R})$, we define
\begin{align*}
	C_T(\tilde{f}, \varepsilon)
	&= \bigcap_{n=1}^{\infty} \left\{ \mu \in C([0,T], M_1(\mathbb{R})): \sup_{|t-s|\le n^{-4}} |\mu_t(\tilde{f}) - \mu_s(\tilde{f})| \le \dfrac{1}{\varepsilon \sqrt{n}} \right\} \\
	&= \left\{ \mu \in C([0,T], M_1(\mathbb{R})): \sup_{|t-s|\le n^{-4}} |\mu_t(\tilde{f}) - \mu_s(\tilde{f})| \le \dfrac{1}{\varepsilon \sqrt{n}}, \forall n \in \mathbb{N} \right\} \\
	&= \left\{ \mu \in C([0,T], M_1(\mathbb{R})): t \rightarrow \mu_t(\tilde{f}) \in C_M(\{(\varepsilon\sqrt{n})^{-1}\}, \{n^{-4}\}) \right\},
\end{align*}
where we can choose $M=\|\tilde f\|_\infty$.
 By Lemma 4.3.13 in \cite{Anderson2009}, for a positive sequence $\{\varepsilon_k\}_{k\in \mathbb N}$ which will be determined in the sequel, the set
\begin{align*}
	\mathcal{H}_M = \Bigl\{\mu\in C([0,T], M_1(\mathbb{R})): \mu_t \in K(\varphi, M), \ \forall t \in [0,T]\Bigr\} {{}}{\cap \bigcap_{k=1}^{\infty}} C_T(\tilde{f}_k, \varepsilon_k),
\end{align*}
where $\{\tilde{f}_k\}_{k\ge 1}$ is given in Condition (D), is compact in $C([0,T], M_1(\mathbb{R}))$. We have
\begin{align}
	\sum_{N=1}^{\infty} \mathbb{P} (L_N \in \mathcal{H}_M^\mathsf{c})
	\le &\sum_{N=1}^{\infty} \mathbb{P} (\exists t \in [0,T], \ \mathrm{s.t.} \ L_N(t) \notin K(\varphi, M)) \notag\\
	&\quad + \sum_{N=1}^{\infty} \sum_{k \ge 1} \mathbb{P} (L_N \notin C_T(\tilde{f}_k, \varepsilon_k)).\label{eq-14}
\end{align}
By using \eqref{uniform bounded 4.3.31} for the case $l = l_1$ and $f = \varphi$ with $l_1$ and $\varphi$ given in condition (A), the first term on the right-hand side can be simplified as
\begin{align}
	&\quad \sum_{N=1}^{\infty} \mathbb{P} (\exists t \in [0,T], \ \mathrm{s.t.} \ L_N(t) \notin K(\varphi, M))\notag \\
	&= \sum_{N=1}^{\infty} \mathbb{P} \left( \sup_{t\in[0,T]} \langle \varphi, L_N(t) \rangle > M+1 \right)\notag \\
	&\le \sum_{N=1}^{\infty} \dfrac{4^{l_1} T^{l_1} \Lambda_{l_1}}{N^{l_1} (M + 1 - D_0T - \sup_{N>0}|\langle \varphi, L_N(0) \rangle|)^{2l_1}} \|\varphi' g_N h_N\|_{L^{\infty}(dx)}^{2l_1} \notag\\
	&= \dfrac{4^{l_1} T^{l_1} \Lambda_{l_1}} {(M + 1 - D_0T - C_0)^{2l_1}} \sum_{N=1}^{\infty} \dfrac{\|\varphi' g_N h_N\|_{L^{\infty}(dx)}^{2l_1}}{N^{l_1}} < \infty,\label{eq-15}
\end{align}
where $C_0$ is given by \eqref{eq-C0}, $D_0$ is given by \eqref{eq-D0}, and $M=M_0$ is sufficiently large such that $M_0>D_0T+C_0$.

By using \eqref{equicontinuous estmation 4.3.32} with $l = l_2$, $f = \tilde{f}_k$, $\eta = n^{-4}$ and $M = \varepsilon_k^{-1}-1$, where $l_2$ and $\tilde f_k$ are given in condition (D),  the second term on the right-hand side of \eqref{eq-14} can be simplified as follows, recalling that $\psi(k)$ is given in \eqref{eq-psi},
\begin{align*}
	&\sum_{N=1}^{\infty} \sum_{k \ge 1} \mathbb{P} (L_N \notin C_T(\tilde{f}_k, \varepsilon_k))\\
	\le& \sum_{N=1}^{\infty} \sum_{k \ge 1} \sum_{n=1}^{\infty} \mathbb{P} \left( \sup_{|t-s|\le n^{-4}} |L_N(t)(\tilde{f}_k) - L_N(s)(\tilde{f}_k)| > \dfrac{1}{\varepsilon_k \sqrt{n}} \right) \\
	\le& \sum_{N=1}^{\infty} \sum_{k \ge 1} \sum_{n=1}^{\infty} \dfrac{6^{2l_2} \Lambda_{l_2} T n^{-3l_2+4}}{(\varepsilon_k^{-1}-1)^{2l_2} N^{l_2}} \|\tilde{f}_k' g_N h_N\|_{L^{\infty}(dx)}^{2l_2} \\
	=& 6^{2l_2} \Lambda_{l_2} T \sum_{n=1}^{\infty} n^{-3l_2+4} \sum_{k \ge 1} \dfrac{1}{(\varepsilon_k^{-1}-1)^{2l_2}} \sum_{N=1}^{\infty} \dfrac{\|\tilde{f}_k' g_N h_N\|_{L^{\infty}(dx)}^{2l_2}}{N^{l_2}} \\
	=& 6^{2l_2} \Lambda_{l_2} T \sum_{n=1}^{\infty} n^{-3l_2+4} \sum_{k \ge 1} \dfrac{\psi(k)}{(\varepsilon_k^{-1}-1)^{2l_2}},
\end{align*}
which is finite if we take $\varepsilon_k$ so that $\varepsilon_k^{-1} > 1 + k \psi(k)^{1/(2l_2)}$.

Thus, it follows from \eqref{eq-14}, \eqref{eq-15}, and the above estimate that
\begin{align*}
	\sum_{N=1}^{\infty} \mathbb{P} (L_N \in \mathcal{H}_{M_0}^\mathsf{c}) < \infty,
\end{align*}
and Borel-Cantelli Lemma implies
\begin{align*}
	\mathbb{P} \left( \liminf_{N \rightarrow \infty} \{L_N \in \mathcal{H}_{M_0} \} \right) = 1.
\end{align*}
Finally, the relative compactness of the family $\{L_N\}_{N \in \mathbb{N}^+}$ follows from the compactness of $\mathcal{H}_{M_0}$, and the proof is concluded.
\end{proof}

{\red The Corollary 3 in 
\cite{Graczyk2013} provided the conditions under which the system of SDEs \eqref{SDE-eigenvalue} has a unique non-exploding and non-colliding strong solution. As a consequence, we have the following corollary.

\begin{corollary} \label{Coro-2.1}
For the system of SDEs \eqref{SDE-eigenvalue}, suppose that the initial value satisfies $\lambda_1^N(0) < \cdots < \lambda_N^N(0)$ and the condition (C) holds. Assume that there exist positive constants $L$, $\alpha$ and $\beta$ with $\alpha + \beta \ge 1$, such that $b_N(x)$, $N^{\alpha}g_N^2(x)$ and $N^{\beta}h_N^2(x)$ are Lipschitz continuous with the Lipschitz constant $L$ for all $N \in \mathbb{N}$, and that
\begin{align*}
	\max_{N\in\mathbb{N}}\{|b_N(0)| + N^{\alpha} g_N^2(0) + N^{\beta} h_N^2(0)\} \le L.
\end{align*}
Besides, suppose that $G_N(x,x)$ is convex or in the H\"{o}lder space $\mathcal{C}^{1,1}(\mathbb R)$, and that $G_N(x,y)$ is strictly positive on $\{x \neq y\}$ for all $N \in \mathbb{N}$. Then for any fixed number $T>0$, the sequence $\{L_N(t), t\in[0,T]\}_{N\in \mathbb N}$ is relatively compact in $C([0,T], M_1(\mathbb{R}))$.
\end{corollary}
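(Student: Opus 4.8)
The plan is to derive the corollary directly from Theorem~\ref{thm1}, so that the work reduces to verifying that its two standing requirements hold under the stated assumptions: first, that the system \eqref{SDE-eigenvalue} admits a non-exploding, non-colliding strong solution on $[0,T]$; and second, that conditions (A), (B), (C) and (D) are all satisfied. Once both are in place, the conclusion is immediate from the theorem.

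For the existence requirement I would simply invoke Corollary~3 of \citet{Graczyk2013}, whose hypotheses coincide exactly with those imposed here: the functions $b_N$, $N^{\alpha}g_N^2$ and $N^{\beta}h_N^2$ are globally Lipschitz with a common constant $L$, bounded at the origin by $L$, while $G_N(x,x)$ is convex or lies in $\mathcal{C}^{1,1}(\bR)$ and $G_N$ is strictly positive on $\{x\neq y\}$. Combined with the ordered initial data $\lambda_1^N(0)<\cdots<\lambda_N^N(0)$, this yields a unique strong solution with $\tau_N=\infty$ almost surely, hence in particular a non-exploding, non-colliding solution on $[0,T]$, which is precisely the hypothesis of Theorem~\ref{thm1}.

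For the second requirement, condition (C) is assumed outright, so only (A), (B) and (D) remain, and for these I would appeal to Remark~\ref{rmk2.2}. That remark supplies (A), (B), (D) with the explicit choice $\varphi(x)=\ln(1+x^2)$ as soon as one has the linear-growth bounds $|b_N(x)|\le c_b|x|$, $g_N^2(x)\le c_g|x|N^{-\alpha}$ and $h_N^2(x)\le c_h|x|N^{-\beta}$ for large $N$ and large $|x|$, with $\alpha+\beta\ge 1$. These bounds are an immediate consequence of the Lipschitz-plus-bounded-at-zero hypotheses: since $b_N$ is $L$-Lipschitz with $|b_N(0)|\le L$, one has $|b_N(x)|\le L(1+|x|)\le 2L|x|$ once $|x|\ge 1$, and the same one-line estimate applied to $N^{\alpha}g_N^2$ and $N^{\beta}h_N^2$ gives $g_N^2(x)\le 2L|x|N^{-\alpha}$ and $h_N^2(x)\le 2L|x|N^{-\beta}$ for $|x|\ge 1$. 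The required constraint $\alpha+\beta\ge 1$ is part of the assumptions, so Remark~\ref{rmk2.2} applies verbatim and delivers conditions (A), (B) and (D).

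With existence secured and all four conditions verified, Theorem~\ref{thm1} yields the relative compactness of $\{L_N(t),\,t\in[0,T]\}_{N\in\mathbb N}$ in $C([0,T],M_1(\bR))$, which concludes the proof. I do not expect a genuine obstacle: the only step needing care is the passage from the Lipschitz and boundedness hypotheses to the linear-growth estimates feeding Remark~\ref{rmk2.2}, and this is a routine one-line bound. In effect the corollary is a dictionary that translates the existence conditions of \citet{Graczyk2013} into the growth conditions of Section~\ref{sec:eig-val}, and the content lies entirely in matching the two sets of hypotheses.
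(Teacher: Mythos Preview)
Your proposal is correct and follows essentially the same route as the paper: invoke Corollary~3 of \citet{Graczyk2013} for the existence of a non-exploding, non-colliding strong solution, deduce the linear-growth bounds $|b_N(x)|\le L(1+|x|)$, $N^{\alpha}g_N^2(x)\le L(1+|x|)$, $N^{\beta}h_N^2(x)\le L(1+|x|)$ from the Lipschitz-plus-bounded-at-zero hypotheses, and then verify conditions (A), (B), (D) with $\varphi(x)=\ln(1+x^2)$ so that Theorem~\ref{thm1} applies. The paper's proof is slightly terser in that it does not cite Remark~\ref{rmk2.2} by name, but the content is identical.
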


\begin{proof}
Under the conditions given in the Corollary, by \cite[Corollary 3]{Graczyk2013}, for each $N$, the system of SDEs \eqref{SDE-eigenvalue} has a unique strong solutions that is non-exploding and non-colliding on $[0, \infty)$. Besides, we have the following estimation
\begin{align*}
	|b_N(x)| \le |b_N(0)| + |b_N(x) - b_N(0)| \le L(1 + |x|),
\end{align*}
which is also satisfied by $N^{\alpha} g_N^2(x)$ and $N^{\beta} h_N^2(x)$. Thus, it is easy to check that the conditions (A) (B) and  (D) are now satisfied (with $\varphi(x) = \ln(1+x^2)$), and the conclusion follows from Theorem \ref{thm1}.
\end{proof}
}


Under proper conditions, the following Theorem provides an equation for the Stieltjes transform of the limit point of $\{L_N\}_{N\in \mathbb N}$.

\begin{theorem}\label{thm2}
{{}}{Let $T>0$ be a fixed number. Assume that
  \eqref{SDE-eigenvalue} has a strong solution that is non-exploding
  and non-colliding for  $t \in [0,T]$.} 
Furthermore, we assume that {\red there exist continuous functions $b(x)$ and $G(x,y)$, such that} $b_N(x)$ converges to $b(x)$ and $N G_N(x,y)$ converges to $G(x,y)$ uniformly as $N$ tends to infinity, and that
\begin{align*}
	\left\| \dfrac{b(x)}{1+x^2} \right\|_{L^{\infty}(dx)} < \infty,\
	\left\| \dfrac{G(x,y)}{(1+|x|) (1+y^2)} \right\|_{L^{\infty}(dxdy)} < \infty.
\end{align*}

If {\red almost surely,} the empirical measure $L_N(0)$ converges weakly to a measure $\mu_0$ as $N$ goes to infinity, and  the sequence $\{L_N\}_{N\in \mathbb N}$ has a limit measure $\mu$ in $C([0,T], M_1(\mathbb{R}))$, then the measure $\mu$ satisfies the equation
\begin{align} \label{limit point equation 4.3.25}
	\int \dfrac{\mu_t(dx)}{z-x}
	=& \int \dfrac{\mu_0(dx)}{z-x} + \int_{0}^t \left[ \int \dfrac{b(x)}{(z-x)^2} \mu_s(dx) \right] ds\notag \\
	&+ \int_{0}^t \left[ \iint \dfrac{G(x,y)}{(z-x) (z-y)^2} \mu_s(dx) \mu_s(dy) \right] ds,
\end{align}
for $z \in \mathbb{C} \setminus \mathbb{R}$.
\end{theorem}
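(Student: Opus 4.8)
The plan is to feed the resolvent test function $f_z(x) = (z-x)^{-1}$, $z \in \bC\setminus\bR$, into the master identity \eqref{pair formula 4.3.26} established in the proof of Theorem~\ref{thm1}, and then pass to the limit term by term. Since $f_z$ is complex-valued one applies \eqref{pair formula 4.3.26} to its real and imaginary parts (both $C^2$ with bounded derivatives on $\bR$ because $|z-x|\ge|\mathrm{Im}\,z|>0$) and recombines by linearity; this is legitimate as every term of \eqref{pair formula 4.3.26} is linear in $f$. The relevant derivatives are $f_z'(x)=(z-x)^{-2}$ and $f_z''(x)=2(z-x)^{-3}$, and the crucial difference quotient is
\begin{align*}
  \frac{f_z'(x)-f_z'(y)}{x-y}
  = \frac{2z-x-y}{(z-x)^2(z-y)^2}
  = \frac{1}{(z-x)(z-y)^2} + \frac{1}{(z-x)^2(z-y)}.
\end{align*}
Because $G_N$ (hence its limit $G$) is symmetric in its arguments, after the prefactor $\tfrac12$ the two summands coincide under the integral against the symmetric measure $\mu_s\otimes\mu_s$, and the double-integral term of \eqref{pair formula 4.3.26} collapses to exactly $\iint G(x,y)(z-x)^{-1}(z-y)^{-2}\mu_s(dx)\mu_s(dy)$, which is the last term of \eqref{limit point equation 4.3.25}.

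With this substitution, \eqref{pair formula 4.3.26} reads $\langle f_z, L_N(t)\rangle = \langle f_z, L_N(0)\rangle + M^N_{f_z}(t) + I^N_b(t) + I^N_d(t) + I^N_G(t)$, and I would treat the five terms as follows. The endpoint terms converge by the hypotheses $L_N(0)\to\mu_0$ and $L_N\to\mu$ in $C([0,T],M_1(\bR))$, since $f_z\in C_0(\bR)$. For the martingale, the bound \eqref{martingale estimation} gives $\langle M^N_{f_z}\rangle_T \le 4TN^{-1}\|f_z' g_N h_N\|_\infty^2$; using $g_N^2(x) h_N^2(x) = \tfrac12 G_N(x,x) = \tfrac{1}{2N}\,NG_N(x,x)$ together with the uniform bound on $NG_N$ and the decay of $f_z'$ shows $\|f_z' g_N h_N\|_\infty^2 = O(N^{-1})$, whence $\mathbb{E}[\sup_{t}|M^N_{f_z}(t)|^2]=O(N^{-2})$ by Doob's inequality, and a Borel--Cantelli argument forces $\sup_t|M^N_{f_z}(t)|\to0$ almost surely. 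The It\^o-correction term $I^N_d(t)=\int_0^t\langle f_z'' g_N^2 h_N^2, L_N(s)\rangle\,ds$ vanishes in the limit by the same identity: its integrand equals $\tfrac{1}{2N}$ times the uniformly bounded quantity $f_z''(x)\,NG_N(x,x)$ (bounded because $NG_N(x,x)$ grows at most like $|x|^3$ while $f_z''$ decays like $|x|^{-3}$), so $I^N_d(t)=O(N^{-1})$.

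The drift term $I^N_b(t)=\int_0^t\langle f_z' b_N, L_N(s)\rangle\,ds$ and the nonlinear term $I^N_G(t)=\tfrac12\int_0^t\iint \tfrac{f_z'(x)-f_z'(y)}{x-y}\,NG_N(x,y)\,L_N(s)(dx)L_N(s)(dy)\,ds$ are handled by the elementary fact that if $\phi_N\to\phi$ uniformly with all $\phi_N,\phi$ bounded and $\nu_N\to\nu$ weakly, then $\int\phi_N\,d\nu_N\to\int\phi\,d\nu$. For $I^N_b$ take $\phi_N=b_N(x)(z-x)^{-2}$; the uniform convergence $b_N\to b$ and the growth bound $\|b(x)/(1+x^2)\|_\infty<\infty$ make $\phi_N\to\phi=b(x)(z-x)^{-2}$ uniformly with bounded limit, and $\nu_N=L_N(s)\to\mu_s$. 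For $I^N_G$ take $\phi_N(x,y)=\tfrac{f_z'(x)-f_z'(y)}{x-y}\,NG_N(x,y)$ and $\nu_N=L_N(s)\otimes L_N(s)\to\mu_s\otimes\mu_s$; here the difference quotient lies in $C_0(\bR^2)$ (it is continuous and vanishes at infinity, hence bounded), the uniform convergence $NG_N\to G$ gives $\|\phi_N-\phi\|_\infty\le\|\tfrac{f_z'(x)-f_z'(y)}{x-y}\|_\infty\,\|NG_N-G\|_\infty\to0$, and the growth control $\|G(x,y)/((1+|x|)(1+y^2))\|_\infty<\infty$ guarantees the limiting integrand $\phi=\tfrac{f_z'(x)-f_z'(y)}{x-y}G(x,y)$ is bounded and continuous. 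Pointwise (in $s$) convergence of the inner integrals together with their uniform boundedness then yields convergence of the time integrals by bounded convergence. Collecting the limits and invoking the symmetrization noted above gives \eqref{limit point equation 4.3.25}.

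I expect the nonlinear term $I^N_G$ to be the main obstacle. The difficulty is twofold: first, one must pass to the limit in a product where both the kernel $NG_N$ and the product measure $L_N\otimes L_N$ vary with $N$, which is why the uniform convergence $NG_N\to G$ is essential rather than mere pointwise convergence; second, since the assumed bound on $G$ permits genuine growth at infinity, one has to verify that the decay of the difference quotient $\tfrac{f_z'(x)-f_z'(y)}{x-y}$ exactly compensates this growth so that the integrand against $\mu_s\otimes\mu_s$ remains bounded and continuous. The It\^o-correction and martingale terms, by contrast, are routine once the identity $g_N^2(x) h_N^2(x)=\tfrac{1}{2N}NG_N(x,x)$ is used to extract the decisive factor $N^{-1}$.
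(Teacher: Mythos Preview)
Your proposal is correct and follows essentially the same route as the paper's own proof: apply \eqref{pair formula 4.3.26} with $f(x)=(z-x)^{-1}$, kill the martingale via the quadratic-variation estimate together with $g_N^2 h_N^2 = \tfrac{1}{2N}\,NG_N(x,x)$ and Borel--Cantelli (this is exactly the content of Remark~\ref{rmk-2.4}), show the It\^o-correction term is $O(N^{-1})$ by the same identity, and pass to the limit in the drift and interaction terms using uniform convergence of the coefficients plus weak convergence of the measures, with the symmetry of $G$ collapsing the double integral to its final form. The only cosmetic differences are that the paper performs the symmetrization before taking the limit whereas you do it after, and your packaging of the convergence step into the single ``$\phi_N\to\phi$ uniformly and $\nu_N\to\nu$ weakly'' principle is slightly cleaner than the paper's explicit two-piece splitting $\int\phi_N\,dL_N - \int\phi\,d\mu = \int(\phi_N-\phi)\,dL_N + \int\phi\,d(L_N-\mu)$.
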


\begin{remark}  \label{rmk-2.4} Taking $x=y$,   the boundedness condition
\begin{align*}
	\left\| \dfrac{G(x,y)}{(1+|x|) (1+y^2)} \right\|_{L^{\infty}(dxdy)} < \infty
\end{align*}
becomes
\begin{align*}
	\left\| \dfrac{G(x,x)}{(1+|x|)^3} \right\|_{L^{\infty}(dx)} < \infty.
\end{align*}
Thus,
\begin{align*}
	\dfrac{NG_N(x,x)}{(1+|x|)^4} \le C
\end{align*}
for some constant $C$ and large  $N$. Note that $G_N(x,x) = 2g_N^2(x)h_N^2(x)$, we have
\begin{align*}
	\sum_{N=1}^{\infty} \dfrac{1}{N} \left\| \dfrac{g_N(x) h_N(x)}{(z-x)^2} \right\|_{L^{\infty}(dx)}^2
	= \sum_{N=1}^{\infty} \dfrac{1}{2N} \left\| \dfrac{G_N(x,x)}{(z-x)^4} \right\|_{L^{\infty}(dx)}
	\le \sum_{N=1}^{\infty} \dfrac{C}{2N^2}
	< \infty.
\end{align*}
\end{remark}

\begin{proof} \hskip-0.5mm {(of Theorem~\ref{thm2}.)}\quad 
 For any limit point $\mu = (\mu_t, t \in [0,T])$ of $L_N$, we can find a subsequence $\{N_i\}$, such that $L_{N_i}$ converges to $\mu$ in $C([0,T], M_1(\mathbb{R}))$ as $N_i$ tends to infinity. By using \eqref{pair formula 4.3.26} for the case $N = N_i$ and $f(x) = (z-x)^{-1}$, and then letting $N_i$ tends to infinity, we have
\begin{align} \label{limit of pair}
	\int \dfrac{\mu_t(dx)}{z-x}
	&= \int \dfrac{\mu_0(dx)}{z-x} + \lim_{N_i \rightarrow \infty} M_f^{N_i}(t) + \lim_{N_i \rightarrow \infty} \int_{0}^t \int \dfrac{b_{N_i}(x)}{(z-x)^2} L_{N_i}(s)(dx) ds \nonumber \\
	&\quad + \lim_{N_i \rightarrow \infty} \int_{0}^t \int \dfrac{2g_{N_i}^2(x) h_{N_i}^2(x)}{(z-x)^3} L_{N_i}(s)(dx) ds \nonumber \\
	&\quad + \lim_{N_i \rightarrow \infty} \dfrac{1}{2} \int_{0}^t \iint \dfrac{(z-x)^{-2} - (z-y)^{-2}}{x - y} N_i G_{N_i} (x, y) L_{N_i}(s)(dx) L_{N_i}(s)(dy) ds.
\end{align}

The second term of right-hand side of \eqref{limit of pair} vanishes almost surely. Indeed, by using \eqref{eq11} for the case $l = 1$ and $f(x) = (z-x)^{-1}$ for some $z \in \mathbb{C} \setminus \mathbb{R}$, we have
\begin{align*}
	\sum_{N=1}^{\infty} \mathbb{P} \left( \sup_{t\in[0,T]} \left| M_{f}^N(t) \right| \ge \varepsilon \right)
	\le \sum_{N=1}^{\infty} \dfrac{4T \Lambda_1}{N \varepsilon^2} \left\| \dfrac{g_N(x) h_N(x)}{(z-x)^2} \right\|_{L^{\infty}(dx)}^2,
\end{align*}
of which the right-hand side is finite due to Remark \ref{rmk-2.4}. By Borel-Cantelli Lemma,
\begin{align*}
	\mathbb{P} \left( \liminf_{N \rightarrow \infty} \left\{ \sup_{t\in[0,T]} \left| M_{f}^N(t) \right| < \varepsilon \right\} \right) = 1,
\end{align*}
i.e. $M_{f}^N(t)$ converges to zero uniformly with respect to $t$ almost surely.

For the third term on the right-hand side of \eqref{limit of pair}, noting that the boundedness of $b(x) (1+x^2)^{-1}$ implies the boundedness of $b(x) (z-x)^{-2}$ for $z \in \mathbb{C} \setminus \mathbb{R}$, {\red which is continuous,} we have
\begin{align*}
	&\quad \left| \int \dfrac{b_{N_i}(x)}{(z-x)^2} L_{N_i}(s)(dx) - \int \dfrac{b(x)}{(z-x)^2} \mu_s(dx) \right| \\
	&\le \left| \int \dfrac{b_{N_i}(x) - b(x)}{(z-x)^2} L_{N_i}(s)(dx) \right| + \left| \int \dfrac{b(x)}{(z-x)^2} (L_{N_i}(s)(dx) - \mu_s(dx)) \right| \\
	&\le \dfrac{\sup_{x} |b_{N_i}(x) - b(x)|}{(\mathrm{Im}(z))^2} + \left| \int \dfrac{b(x)}{(z-x)^2} (L_{N_i}(s)(dx) - \mu_s(dx)) \right|,
\end{align*}
the right-hand of which converges to 0 as $N_i\to\infty$ {\red by the uniform convergence of $b_{N_i}(x)$ towards $b(x)$ and the weak convergence of the empirical measure $L_{N_i}(s)$ towards $\mu_s$}. Besides, the boundedness of $b(x)/(1 + x^2)$ and the uniform convergence of $b_N(x)$ to $b(x)$ imply the boundedness of $b_N(x)/(z-x)^2$. Then it follows from the dominated convergence theorem that
\begin{align*}
	\lim_{N_i \rightarrow \infty} \int_{0}^t \int \dfrac{b_{N_i}(x)}{(z-x)^2} L_{N_i}(s)(dx) ds
	= \int_{0}^t  \int \dfrac{b(x)}{(z-x)^2} \mu_s(dx)  ds.
\end{align*}

Similarly, for the fourth term on the right-hand side of \eqref{limit of pair}, noting that $2N_i g_{N_i}^2(x) h_{N_i}^2(x) = N_i G_{N_i}(x,x)$, we have
\begin{align*}
	&\quad \left| \int \dfrac{2g_{N_i}^2(x) h_{N_i}^2(x)}{(z-x)^3} L_{N_i}(s)(dx) \right| = \dfrac{1}{N_i} \left| \int \dfrac{N_i G_{N_i}(x,x)}{(z-x)^3} L_{N_i}(s)(dx) \right| \\
	&\le \dfrac{1}{N_i} \left| \int \dfrac{N_i G_{N_i}(x,x) - G(x,x)}{(z-x)^3} L_{N_i}(s)(dx) \right| + \dfrac{1}{N_i} \left| \int \dfrac{G(x,x)}{(z-x)^3} L_{N_i}(s)(dx) \right| \\
	&\le \dfrac{\sup_{x,y} |N_i G_{N_i}(x,x) - G(x,x)|}{N_i (\mathrm{Im}(z))^3} + \dfrac{C_z}{N_i} \left\| \dfrac{G(x,x)}{(1+|x|^3)} \right\|_{L^{\infty}(dx)} \end{align*}
	which tend to 0 as $N_i\to\infty$. Here, $C_z$ is a constant depending only on $z$.

Finally, using the identity
\begin{align*}
	&\dfrac{(z-x)^{-2} - (z-y)^{-2}}{x - y}
	= \dfrac{(z-y)^2 - (z-x)^2}{(z-x)^2 (z-y)^2 (x-y)} \\
	=& \dfrac{2z-x-y}{(z-x)^2 (z-y)^2} 
	= \dfrac{1}{(z-x) (z-y)^2} + \dfrac{1}{(z-x)^2 (z-y)},
\end{align*}
the last term on the right-hand side of \eqref{limit of pair} can be simplified as
\begin{align*}
	&\quad \lim_{N_i \rightarrow \infty} \dfrac{1}{2} \int_{0}^t \iint \dfrac{(z-x)^{-2} - (z-y)^{-2}}{x - y} N_i G_{N_i} (x, y) L_{N_i}(s)(dx) L_{N_i}(s)(dy) ds \\
	&= \lim_{N_i \rightarrow \infty} \dfrac{1}{2} \int_{0}^t \iint \left[ \dfrac{1}{(z-x) (z-y)^2} + \dfrac{1}{(z-x)^2 (z-y)} \right] N_i G_{N_i} (x, y) L_{N_i}(s)(dx) L_{N_i}(s)(dy) ds \\
	&= \lim_{N_i \rightarrow \infty} \int_{0}^t \iint \dfrac{N_i G_{N_i} (x, y)}{(z-x) (z-y)^2} L_{N_i}(s)(dx) L_{N_i}(s)(dy) ds,
\end{align*}
where the last equality follows from the symmetry of $G_{N_i}$. Now,
\begin{align*}
	&\quad \left| \iint \dfrac{N_i G_{N_i} (x, y)}{(z-x) (z-y)^2} L_{N_i}(s)(dx) L_{N_i}(s)(dy) - \iint \dfrac{G(x, y)}{(z-x) (z-y)^2} \mu_s(dx) \mu_s(dy) \right| \\
	&\le \left| \iint \dfrac{N_i G_{N_i} (x, y) - G(x,y)}{(z-x) (z-y)^2} L_{N_i}(s)(dx) L_{N_i}(s)(dy)\right| \\
	&\quad + \left| \iint \dfrac{G(x, y)}{(z-x) (z-y)^2} (L_{N_i}(s)(dx) L_{N_i}(s)(dy) - \mu_s(dx) \mu_s(dy)) \right| \\
	&\le  \dfrac{\sup_{x,y} |N_i G_{N_i} (x, y) - G(x,y)|}{|\mathrm{Im}(z)|^3} + \left| \iint \dfrac{G(x, y)}{(z-x) (z-y)^2} (L_{N_i}(s)(dx) L_{N_i}(s)(dy) - \mu_s(dx) \mu_s(dy)) \right| 
\end{align*}
converges to 0 as $N_i\to \infty$. {{}}{Also note that the boundedness of $G(x,y)/(1 + |x|)(1 + y^2)$ and the uniform convergence of $NG_N(x,y)$ to $G(x,y)$ yield the boundedness of $NG_N(x,y)/(z-x)(z-y)^2$.} Thus, by the dominated convergence theorem {\red and the continuity of the function $G(x,y)$},
\begin{align*}
	\lim_{N_i \rightarrow \infty} \int_{0}^t \iint \dfrac{N_i G_{N_i} (x, y)}{(z-x) (z-y)^2} L_{N_i}(s)(dx) L_{N_i}(s)(dy) ds = \int_{0}^t \iint \dfrac{G(x, y)}{(z-x) (z-y)^2} \mu_s(dx) \mu_s(dy) ds.
\end{align*}
Therefore, \eqref{limit point equation 4.3.25}  is obtained from \eqref{limit of pair}.  The proof is complete. 
\end{proof}

{\red
Using the conditions in Corollary 3 of \cite{Graczyk2013} that guarantee the existence and uniqueness of the non-exploding and non-colliding strong solution to the system of SDEs \eqref{SDE-eigenvalue}, we have the following corollary.
	
\begin{corollary} \label{Coro-2.2}
For the system of SDEs \eqref{SDE-eigenvalue}, suppose $\lambda_1^N(0) < \cdots < \lambda_N^N(0)$. Assume that there exist positive constants $L$ and $\alpha$, such that $b_N(x), N^{\alpha} g_N^2(x), N^{1-\alpha}h_N^2(x)$ are Lipschitz continuous with the Lipschitz constant $L$ for all $N \in \mathbb{N}$, and
\begin{align*}
	\max_{N\in\mathbb{N}}\{|b_N(0)| + N^{\alpha} g_N^2(0) + N^{1-\alpha} h_N^2(0)\} \le L.
\end{align*}
Besides, suppose that $G_N(x,x)$ is convex or in the H\"{o}lder space $\mathcal{C}^{1,1}$, and that $G_N(x,y)$ is strictly positive on $\{x \neq y\}$. Moreover, assume that $b_N(x)$ converges to a continuous function $b(x)$ and $NG_N(x,y)$ converges to a continuous function $G(x,y)$ uniformly as $N$ tends to infinity.
	
If the empirical measure $L_N(0)$ converges weakly to a measure $\mu_0$ almost surely   as $N$ goes to infinity, and  the sequence $\{L_N\}_{N\in \mathbb N}$ has a limit measure $\mu$ in $C([0,T], M_1(\mathbb{R}))$ for any fixed number $T > 0$, then the measure $\mu$ satisfies the equation
	\begin{align} \label{eq-2.17'}
		\int \dfrac{\mu_t(dx)}{z-x}
		=& \int \dfrac{\mu_0(dx)}{z-x} + \int_{0}^t \left[ \int \dfrac{b(x)}{(z-x)^2} \mu_s(dx) \right] ds\notag \\
		&+ \int_{0}^t \left[ \iint \dfrac{G(x,y)}{(z-x) (z-y)^2} \mu_s(dx) \mu_s(dy) \right] ds,
	\end{align}
	for $z \in \mathbb{C} \setminus \mathbb{R}$, $t \in [0,T]$.
\end{corollary}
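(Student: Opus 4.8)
The plan is to deduce Corollary~\ref{Coro-2.2} directly from Theorem~\ref{thm2}, in exactly the same spirit as Corollary~\ref{Coro-2.1} was deduced from Theorem~\ref{thm1}. Two things must be verified: first, that the stated hypotheses guarantee a unique non-exploding and non-colliding strong solution of \eqref{SDE-eigenvalue} on $[0,T]$, so that Theorem~\ref{thm2} is applicable; and second, that the growth and boundedness conditions on the limiting coefficients $b$ and $G$ required by Theorem~\ref{thm2} are implied by the assumptions. The first point is immediate from \cite[Corollary 3]{Graczyk2013}: the Lipschitz continuity (with constant $L$) of $b_N$, $N^{\alpha}g_N^2$, $N^{1-\alpha}h_N^2$, together with the bound on their values at $0$, the convexity or $\mathcal{C}^{1,1}$ regularity of $G_N(x,x)$, and the strict positivity of $G_N$ off the diagonal, are precisely the conditions of that result, so \eqref{SDE-eigenvalue} admits a unique strong solution that is non-exploding and non-colliding on $[0,\infty)$, a fortiori on $[0,T]$.

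The only substantive step is to extract linear growth bounds from the Lipschitz assumptions and then pass them to the limit. First I would note that Lipschitz continuity with constant $L$ and the value bound at the origin give
\begin{align*}
	|b_N(x)| \le |b_N(0)| + L|x| \le L(1+|x|),
\end{align*}
and, in the same way, $N^{\alpha} g_N^2(x) \le L(1+|x|)$ and $N^{1-\alpha} h_N^2(x) \le L(1+|x|)$ for every $N$. Since $b_N \to b$ uniformly, the bound $|b(x)| \le L(1+|x|)$ survives in the limit, so that $|b(x)|/(1+x^2) \le L(1+|x|)/(1+x^2)$ is bounded; this is the first boundedness hypothesis of Theorem~\ref{thm2}.

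For $G$ the key observation is that the complementary exponents $\alpha$ and $1-\alpha$ are chosen exactly so that the factor $N$ cancels. Using $G_N(x,y) = g_N^2(x) h_N^2(y) + g_N^2(y) h_N^2(x)$ together with the two growth bounds,
\begin{align*}
	N G_N(x,y)
	&= N\bigl[ g_N^2(x) h_N^2(y) + g_N^2(y) h_N^2(x) \bigr] \\
	&\le N \cdot \frac{L^2}{N} \bigl[ (1+|x|)(1+|y|) + (1+|y|)(1+|x|) \bigr]
	= 2L^2 (1+|x|)(1+|y|).
\end{align*}
By the assumed uniform convergence $N G_N \to G$, this estimate passes to the limit, giving $G(x,y) \le 2L^2(1+|x|)(1+|y|)$; dividing by $(1+|x|)(1+y^2)$ leaves $2L^2(1+|y|)/(1+y^2)$, which is bounded. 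This verifies the second boundedness hypothesis of Theorem~\ref{thm2}.

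With both boundedness conditions and the uniform convergences of $b_N$ and $N G_N$ established, and with the assumed almost-sure weak convergence $L_N(0) \to \mu_0$ and the existence of the limit $\mu$, all hypotheses of Theorem~\ref{thm2} hold, and \eqref{eq-2.17'} follows at once. I do not anticipate any genuine obstacle here; the only point deserving emphasis is that the scaling $\beta = 1-\alpha$ (rather than merely $\alpha+\beta \ge 1$, as in Corollary~\ref{Coro-2.1}) is exactly what makes $N G_N$ converge to a finite, generally non-degenerate limit, and it is this equality of scales that produces the clean product bound $N G_N(x,y) \le 2L^2(1+|x|)(1+|y|)$ used above.
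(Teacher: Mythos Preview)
Your proposal is correct and follows essentially the same approach as the paper: invoke \cite[Corollary~3]{Graczyk2013} for existence and uniqueness of a non-exploding, non-colliding strong solution, derive the linear growth bounds $|b_N(x)|\le L(1+|x|)$, $N^{\alpha}g_N^2(x)\le L(1+|x|)$, $N^{1-\alpha}h_N^2(x)\le L(1+|x|)$ from the Lipschitz hypotheses, multiply them to obtain $NG_N(x,y)\le 2L^2(1+|x|)(1+|y|)$, and then verify the boundedness conditions of Theorem~\ref{thm2}. The paper's proof is slightly terser (it writes the bound for $NG_N$ directly and with constant $L^2$, apparently displaying only one of the two symmetric summands), but the substance is identical.
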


\begin{proof}
By \cite[Corollary 3]{Graczyk2013}, we can conclude that for each $N$, the SDEs \eqref{SDE-eigenvalue} has a unique strong solution that is non-exploding and non-colliding on $[0, \infty)$. Moreover, the estimation in the proof of Corollary \ref{Coro-2.1} is still valid for $b_N(x)$, $N^{\alpha} g_N^2(x)$ and $N^{1-\alpha} h_N^2(x)$. Besides, we have
\begin{align*}
	|NG_N(x,y)|
	&\le \left( |N^{\alpha} g_N^2(x) - N^{\alpha} g_N^2(0)| + |N^{\alpha} g_N^2(0)| \right) \left( |N^{1-\alpha} h_N^2(y) - N^{1-\alpha} h_N^2(0)| + |N^{1-\alpha} h_N^2(0)| \right) \\
	&\le L^2 (1+|x|)(1+|y|).
\end{align*}
It can be easily checked that all the conditions in Theorem \ref{thm2} are satisfied.
\end{proof}

}

\begin{remark}[The normalized case]
Now we suppose that $Y_t^N$ satisfies the following equation
\begin{align} \label{non-normalized SDE for matrix}
	dY_t^N = g(Y_t^N) dB_t h(Y_t^N) + h(Y_t^N) dB_t^{\intercal} g(Y_t^N) + a(Y_t^N) dt.
\end{align}
Then the  equation for $X_t^N := \dfrac{1}{N} Y_t^N$ is
\begin{align*}
	dX_t^N = \dfrac{1}{N} g(NX_t^N) dB_t h(NX_t^N) + \dfrac{1}{N} h(NX_t^N) dB_t^{\intercal} g(NX_t^N) + \dfrac{1}{N} a(NX_t^N) dt,
\end{align*}
which coincides with \eqref{SDE-matrix} with
\begin{align*}
	g_N(x) h_N(y) = \dfrac{1}{N} g(Nx) h(Ny) \text{ and }
	b_N(x) = \dfrac{1}{N} a(Nx).
\end{align*}
Therefore, under the conditions in Theorem \ref{thm1} and Theorem \ref{thm2}, the equation \eqref{limit point equation 4.3.25} is still valid for a limit measure $\mu$ of the empirical measures of the eigenvalues of $X^N$ with
\begin{align*}
	b(x) = \lim_{N \rightarrow \infty} \dfrac{1}{N} a(Nx) \text{ and }
	G(x,y) = \lim_{N \rightarrow \infty} \dfrac{1}{N} [g^2(Nx) h^2(Ny) + h^2(Nx) g^2(Ny)].
\end{align*}
\end{remark}

\section{Limit point of empirical measure for particle systems}
\label{sec:particles}

In \cite{Graczyk2014}, the following system of SDEs was introduced:
for  $1 \le i \le N$ and  $ t \ge 0$,
\begin{align} \label{SDE-particle}
	dx_i^N(t) = \sigma_i^N(x_i^N(t)) dW_i(t)+ \left( b_N(x_i^N(t)) + \sum_{j:j\neq i} \dfrac{H_N(x_i^N(t), x_j^N(t))}{x_i^N(t) - x_j^N(t)} \right) dt,
\end{align}
where $H_N(x,y)$ is a non-negative symmetric function, and the existence and
uniqueness of the non-colliding strong solution was studied. Clearly,
this particle system generalizes the system \eqref{SDE-eigenvalue} for
eigenvalues of a generalized Wishart process studied in
Section~\ref{sec:eig-val}.
{\red There is a huge literature on related interacting particle systems,
{particularly} on those related to Bessel processes. For background
  information,  we here  refer to
  the survey papers \citet{SurveyBessel03} and
  \citet{Zambotti17}, and  the recent book \citet{Katori16}.
}

In this section, we extend the results established in Section~\ref{sec:eig-val} for the particle system. Here the corresponding empirical measures are
\begin{align*}
	L_N(t) = \dfrac{1}{N} \sum_{i=1}^N \delta_{x_i^N(t)}
\end{align*}
We assume the following conditions which are similar to those in Section~\ref{sec:eig-val}.

\begin{itemize}
\item[(A')] There exists a positive function $\varphi(x) \in C^2(\mathbb{R})$ such that
$\lim\limits_{|x|\rightarrow +\infty} \varphi(x) = +\infty,$
 $\varphi'(x)b_N(x)$ is bounded with respect to $(x,N)$, and $\varphi''(x) \sigma_i^N(x)^2$ is bounded with respect to $(x, i, N)$,  
 and $\varphi'(x) \sigma_i^N(x)$ satisfies
\begin{align*}
	\sum_{N=1}^{\infty} \left( \dfrac{\max\limits_{1 \le i \le N} \|\varphi'  \sigma_i^N\|_{L^{\infty}(dx)}^2}{N} \right)^{l_1} < \infty
\end{align*}
for some positive integer $l_1$. 

\item[(B')] The function
$	N H_N(x,y) \dfrac{\varphi'(x) - \varphi'(y)}{x-y}$
is bounded with respect to $(x,y, N)$. 

 \item[(C')] 
\begin{align*}
	 C_0':=\sup_{N>0} \langle \varphi, L_N(0) \rangle
	= \sup_{N>0} \dfrac{1}{N} \sum_{i=1}^N \varphi \left( \lambda_i^N(0) \right) < \infty.
\end{align*}
\item[(D')]  There exists a sequence  $\{\tilde{f}_k\}_{k\ge 1}$ of
  $C^2(\bR)$ functions such that it is dense in $C_0(\mathbb{R})$ and that $\tilde{f}_k'(x) \sigma_i^N(x)$ satisfies
\begin{align*}
	\psi(k) = \sum_{N=1}^{\infty} \left( \dfrac{\max\limits_{1 \le i \le N} \|\tilde{f}_k' \sigma_i^N\|_{L^{\infty}(dx)}^2}{N} \right)^{l_2} < \infty
\end{align*}
for some positive integer $l_2 \ge 2$.
\end{itemize}

\begin{remark} Similar to Remark \ref{rmk2.2},
suppose that $b_N(x) \le c_b|x|$, $\sigma_i^N(x) \le c_0 |x|$ and $H_N(x,y) \le c_h |xy| N^{-\gamma}$ for large $N$ and large $|x|, |y|$ with constants $c_b, c_0, c_h$ and $\gamma \ge 1$, then we can choose $\varphi(x) = \ln(1 + x^2)$ to satisfy the conditions.
\end{remark}

\begin{theorem}\label{thm3}
 Let $T>0$ be a fixed number. {Suppose that \eqref{SDE-particle} has a strong solution that is non-exploding and non-colliding for $t \in [0,T]$.} Then under the conditions (A'), (B'), (C') and (D'),  the sequence $\{L_N(t), t\in[0,T]\}_{N\in\mathbb N}$ is relatively compact in $C([0,T], M_1(\mathbb{R}))$ almost surely.
\end{theorem}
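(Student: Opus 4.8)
The plan is to follow the three‑step scheme of the proof of Theorem~\ref{thm1} essentially verbatim, tracking only the two structural changes in \eqref{SDE-particle}: the diffusion coefficient $2g_Nh_N$ is replaced by the particle‑dependent function $\sigma_i^N$, and the interaction kernel $G_N$ is replaced by $H_N$. First I would apply It\^o's formula to $f(x_i^N(t))$ for $f\in C^2(\mathbb R)$ and average over $i$, obtaining the exact analog of \eqref{pair formula 4.3.26}: namely $\langle f,L_N(t)\rangle$ equals $\langle f,L_N(0)\rangle$, plus the martingale $M_f^N(t)=\frac1N\sum_{i}\int_0^t f'(x_i^N(s))\sigma_i^N(x_i^N(s))\,dW_i(s)$, plus the drift $\int_0^t\langle f'b_N,L_N(s)\rangle\,ds$, plus the second‑order It\^o term $\frac{1}{2N}\sum_i\int_0^t f''(x_i^N(s))\,\sigma_i^N(x_i^N(s))^2\,ds$, plus the symmetrized interaction term $\frac{N}{2}\int_0^t\iint\frac{f'(x)-f'(y)}{x-y}H_N(x,y)\,L_N(s)(dx)L_N(s)(dy)\,ds$, minus the diagonal correction $\frac{1}{2N}\sum_i\int_0^t f''(x_i^N(s))H_N(x_i^N(s),x_i^N(s))\,ds$, where as before one adopts the convention $\frac{f'(x)-f'(y)}{x-y}=f''(x)$ on the diagonal.

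The one place where the argument genuinely departs from Theorem~\ref{thm1} is this diagonal bookkeeping. In the eigenvalue setting the It\^o second‑order term and the diagonal correction fuse through the identity $G_N(x,x)=2g_N^2h_N^2$; here $\sigma_i^N$ and $H_N$ are unrelated, so the two contributions must be controlled separately. This is exactly why condition (A') now additionally requires $\varphi''(x)\sigma_i^N(x)^2$ to be bounded in $(x,i,N)$: it bounds the It\^o term $\frac{1}{2N}\sum_i\int_0^t f''(\sigma_i^N)^2\,ds$ uniformly. The diagonal correction is absorbed by condition (B'), which (just as in the remark following Theorem~\ref{thm1}) forces $NH_N(x,x)\varphi''(x)$ to be bounded, so that term is in fact $O(1/N)$. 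I would then record the quadratic‑variation bound
\[
  \langle M_f^N\rangle_t=\frac{1}{N^2}\sum_{i=1}^N\int_0^t\bigl|f'(x_i^N(s))\sigma_i^N(x_i^N(s))\bigr|^2\,ds\le\frac{T}{N}\max_{1\le i\le N}\|f'\sigma_i^N\|_{L^\infty(dx)}^2,
\]
which is the precise analog of \eqref{martingale estimation} with $\|f'g_Nh_N\|_{L^\infty}$ replaced by $\max_i\|f'\sigma_i^N\|_{L^\infty}$, and I would likewise check that the deterministic part of $\langle f,L_N(t)\rangle-\langle f,L_N(s)\rangle$ is Lipschitz in time with a constant bounded uniformly in $N$, using (A') and (B').

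With these two substitutions, Steps~2 and~3 transcribe essentially word for word. The uniform bound \eqref{uniform bounded 4.3.31} and the equicontinuity estimate \eqref{equicontinuous estmation 4.3.32} both hold after replacing $\|f'g_Nh_N\|_{L^\infty}$ throughout by $\max_i\|f'\sigma_i^N\|_{L^\infty}$. I would then reuse the compactness construction unchanged: the tight sublevel set $K(\varphi,M)$ supplied by (A') and (C'), the Arzel\`a--Ascoli sets $C_M$, and Lemma 4.3.13 in \cite{Anderson2009} to assemble the compact set $\mathcal H_M$. Applying the uniform bound with $l=l_1$, $f=\varphi$ and the equicontinuity estimate with $l=l_2$, $f=\tilde f_k$, the summability hypotheses in (A') and (D') — now phrased with the $\max_i$ norms — render $\sum_N\mathbb P(L_N\in\mathcal H_{M_0}^{\mathsf c})$ finite, and Borel--Cantelli yields relative compactness almost surely. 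The main obstacle is therefore not analytic but structural: one must verify that splitting the It\^o correction off from the $H_N$‑diagonal term still leaves every quantity controlled by the modified conditions (A') and (B'); once that separation is confirmed, the remainder of the proof is a direct copy of the proof of Theorem~\ref{thm1}.
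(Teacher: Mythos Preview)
Your proposal is correct and matches the paper's own approach: the paper simply states that the proof of Theorem~\ref{thm3} is analogous to that of Theorem~\ref{thm1} and omits it. You have correctly identified the one genuine structural change---that the It\^o second-order term $\frac{1}{2N}\sum_i\int_0^t f''(\sigma_i^N)^2\,ds$ and the $H_N$-diagonal correction no longer combine, which is precisely why condition~(A') adds the separate hypothesis that $\varphi''(x)\sigma_i^N(x)^2$ be bounded in $(x,i,N)$---and the rest is indeed a direct transcription with $\|f'g_Nh_N\|_{L^\infty}$ replaced by $\max_i\|f'\sigma_i^N\|_{L^\infty}$.
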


{\red
Analogous to Corollary \ref{Coro-2.1}, we have the following corollary of Theorem \ref{thm3},  based on the conditions given in \cite{Graczyk2014} which assures a unique non-exploding and non-colliding strong solution to \eqref{SDE-particle}.

\begin{corollary} \label{Coro-3.1}
For the system of SDEs \eqref{SDE-particle}, assume that the initial value satisfies $\lambda_1^N(0) < \cdots < \lambda_N^N(0)$ and condition (C'). Suppose that $\sigma_i^N(x)^2$ is Lipschitz continuous for each $1 \le i \le N$, and $H_N(x,y)$ is continuous for all $N\in \mathbb{N}$. Moreover, there exists a positive number $L>0$, such that $b_N(x)$ is Lipschitz continuous with the Lipschitz constant $L$ for all $N \in \mathbb{N}$, and
\begin{align*}
	\sup_{N\in\mathbb{N}}\{|b_N(0)|\} \le L.
\end{align*}
Besides, suppose that there exist constant $c_2 \ge 0$ that does not depend on $N$, and constants $c_3(N), c_4(N)$ that may depend  on $N$, such that for $1 \le i \le N$,
\begin{enumerate}
	\item[(a)] $H_N(x,y) \le \dfrac{c_2}{N} (1 + |xy|)$, $\forall x, y \in \bR$;
	\item[(b)] $\frac{H_N(w,z)}{z-w} \le \dfrac{H_N(x,y)}{y-x}$, $\forall w < x < y < z$;
	\item[(c)] $\sigma_i^N(x)^2 + \sigma_i^N(y)^2 \le c_3(N) (x-y)^2 + 4H_N(x,y)$, $\forall x, y \in \bR$;
	\item[(d)] $H_N(x,y)(y-x) + H_N(y,z)(z-y) \le c_4(N) (z-y)(z-x)(y-x) + H_N(x,z)(z-x)$, $\forall x < y < z$.
\end{enumerate}
Then for any fixed number $T > 0$, the sequence $\{L_N(t), t\in[0,T]\}_{N\in\mathbb N}$ is relatively compact in $C([0,T], M_1(\mathbb{R}))$ almost surely.
\end{corollary}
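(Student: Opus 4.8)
The plan is to reproduce the two-stage scheme of the proof of Corollary~\ref{Coro-2.1}: first invoke the existence theory of \cite{Graczyk2014} to discharge the standing hypothesis of Theorem~\ref{thm3}, and then verify conditions (A'), (B'), (C') and (D') with the explicit choice $\varphi(x)=\ln(1+x^2)$, after which Theorem~\ref{thm3} applies verbatim to yield the relative compactness.

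For the first stage I would observe that conditions (a)--(d), together with the Lipschitz continuity of each $\sigma_i^N(\cdot)^2$, the continuity of $H_N$, and the Lipschitz bound on $b_N$ with $\sup_N|b_N(0)|\le L$, are exactly the hypotheses under which \cite{Graczyk2014} establishes that, for every fixed $N$, the particle system \eqref{SDE-particle} possesses a unique strong solution that is non-exploding and non-colliding on $[0,\infty)$. Citing that result supplies the solution required by Theorem~\ref{thm3}, and reduces the task to checking (A')--(D').

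The main obstacle is controlling the diffusion coefficients $\sigma_i^N$: the corollary only assumes $\sigma_i^N(\cdot)^2$ Lipschitz, with a constant that may depend on both $i$ and $N$, and this alone carries none of the $N^{-1}$ decay demanded by (A') and (D'). The resolution is that the diffusion is slaved to the interaction kernel. Specializing condition (c) to $y=x$ kills the $c_3(N)(x-y)^2$ term — so that the possibly $N$-dependent constant $c_3(N)$ never enters — and gives $2\sigma_i^N(x)^2\le 4H_N(x,x)$; combined with condition (a) this produces the uniform bound
\begin{align*}
	\sigma_i^N(x)^2 \le 2 H_N(x,x) \le \frac{2c_2}{N}(1+x^2), \qquad x\in\mathbb{R},\ 1\le i\le N,
\end{align*}
which is the particle analogue of the estimate on $N g_N^2(x) h_N^2(x)$ that forced the squared diffusion coefficient to be $O(N^{-1})$ in Section~\ref{sec:eig-val}. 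This single inequality carries the entire weight of the verification.

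With it in hand the remaining checks are routine, using $\varphi'(x)=2x/(1+x^2)$ and $\varphi''(x)=2(1-x^2)/(1+x^2)^2$. The product $\varphi'b_N$ is bounded because $|b_N(x)|\le L(1+|x|)$ while $\varphi'(x)=O(1/|x|)$; the quantity $\varphi''(x)\sigma_i^N(x)^2$ is bounded because $|\varphi''(x)|(1+x^2)$ is bounded; and since $\varphi'(x)^2(1+x^2)=4x^2/(1+x^2)\le 4$, one gets $\max_i\|\varphi'\sigma_i^N\|_{L^\infty}^2\le 8c_2/N$, so the series in (A') is dominated by $\sum_N N^{-2l_1}<\infty$. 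Condition (D') follows identically once $\{\tilde f_k\}$ is taken to be a dense family of compactly supported $C^2$ functions, whence $|\tilde f_k'(x)|\sqrt{1+x^2}$ is bounded and $\max_i\|\tilde f_k'\sigma_i^N\|_{L^\infty}^2\le C_k/N$. For (B'), condition (a) gives $NH_N(x,y)\le c_2(1+|xy|)\le c_2(1+|x|)(1+|y|)$, and the boundedness of $(1+|x|)(1+|y|)\frac{\varphi'(x)-\varphi'(y)}{x-y}$ for $\varphi=\ln(1+x^2)$ is the fact already recorded in Remark~\ref{rmk2.2}; note that conditions (b) and (d) play no role in this stage, being needed only for the existence result. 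Finally (C') is assumed outright, so Theorem~\ref{thm3} delivers the asserted almost-sure relative compactness of $\{L_N\}$.
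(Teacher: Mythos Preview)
Your proposal is correct and follows essentially the same approach as the paper: invoke \cite{Graczyk2014} for existence of a non-exploding, non-colliding strong solution, extract the key bound $\sigma_i^N(x)^2 \le 2H_N(x,x) \le 2c_2(1+x^2)/N$ by setting $y=x$ in (c) and applying (a), and then verify (A')--(D') with $\varphi(x)=\ln(1+x^2)$ so that Theorem~\ref{thm3} applies. The paper's proof is terser (it simply asserts that the conditions are ``easily checked''), but your expanded verification matches it in substance.
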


\begin{proof}
As in the proof of Corollary \ref{Coro-2.1},  the estimation $|b_N(x)| \le L(1+|x|)$ holds. By (a) and (c), we have $\sigma_i^N(x)^2 \le 2H_N(x,x) \le 2c_2 (1 + |x|^2)/N$. By \cite{Graczyk2014},  the system \eqref{SDE-particle} has a unique strong solution that is non-exploding and non-colliding on $[0, \infty)$,  for each $N\in \mathbb N$. Besides, it can be easily checked that the conditions (A') (B') and (D') are satisfied with $\varphi(x) = \ln (1+x^2)$. Thus, the desired result  comes from Theorem \ref{thm3}.
\end{proof}

}

A similar equation for the Stieltjes transform of the limit measure is given below. 

\begin{theorem}\label{thm4}
{Let $T>0$ be a fixed number. Assume that \eqref{SDE-particle} has a strong solution that is non-exploding and non-colliding for $t \in [0,T]$.} Suppose that
\begin{align*}
	\sum_{N=1}^{\infty} \left( \dfrac{1}{N} \max_{1 \le i \le N} \left\| \dfrac{\sigma_i^N(x)}{1+x^2} \right\|_{L^{\infty}(dx)}^2 \right)^{l_3} < \infty
\end{align*}
for some positive integer $l_3$, and  that there exists a {\red continuous} function $\sigma(x)$ such that
\begin{align}\label{eq-sigma}
	\lim_{N \rightarrow \infty} \max_{1 \le i \le N} \left\| \dfrac{\sigma_i^N(x)^2 - \sigma(x)^2}{1+x^3} \right\|_{L^{\infty}(dx)} = 0.
\end{align}

Furthermore, assume that {\red there exist continuous functions $b(x)$ and $H(x,y)$, such that} $b_N(x)$ converges to $b(x)$ and $N H_N(x,y)$ converges to $H(x,y)$ uniformly as $N$ tends to infinity, and that
\begin{align*}
	\left\| \dfrac{b(x)}{1+x^2} \right\|_{L^{\infty}(dx)} < \infty,\
	\left\| \dfrac{H(x,y)}{(1+|x|) (1+y^2)} \right\|_{L^{\infty}(dxdy)} < \infty, \ \left\| \dfrac{\sigma(x)^2}{1+x^3} \right\|_{L^{\infty}(dx)} < \infty.
\end{align*}

If the empirical measure $L_N(0)$ converges weakly as $N$ goes to infinity to a measure $\mu_0$ almost surely, and  the sequence $L_N$ has a limit measure $\mu$ in $C([0,T], M_1(\mathbb{R}))$, then the measure $\mu$ satisfies the equation
\begin{align} 
	\int \dfrac{\mu_t(dx)}{z-x}
	=& \int \dfrac{\mu_0(dx)}{z-x} + \int_{0}^t \left[ \int \dfrac{b(x)}{(z-x)^2} \mu_s(dx) \right] ds +\int_{0}^t \left[ \int \dfrac{\sigma(x)^2}{(z-x)^3} \mu_s(dx) \right] ds\notag \\
	&+ \int_{0}^t \left[ \iint \dfrac{H(x,y)}{(z-x) (z-y)^2} \mu_s(dx) \mu_s(dy) \right] ds,\label{eq-19}
\end{align}
for $z \in \mathbb{C} \setminus \mathbb{R}$.
\end{theorem}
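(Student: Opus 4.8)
The plan is to mirror the proof of Theorem~\ref{thm2}, the essential new feature being the treatment of the $i$-dependent diffusion coefficients $\sigma_i^N$. First I would derive the analogue of \eqref{pair formula 4.3.26} for the particle system \eqref{SDE-particle}. Applying It\^o's formula to $f(x_i^N(t))$ with $f\in C^2(\bR)$, summing over $i$ and dividing by $N$, the second-order term now reads $\frac{1}{2N}\sum_{i=1}^N\int_0^t f''(x_i^N(s))\sigma_i^N(x_i^N(s))^2\,ds$, while the interaction sum is symmetrized exactly as in Section~\ref{sec:eig-val}. This produces
\begin{align*}
\langle f, L_N(t)\rangle
&= \langle f, L_N(0)\rangle + M_f^N(t) + \int_0^t \langle f'b_N, L_N(s)\rangle\,ds \\
&\quad + \frac{1}{2N}\sum_{i=1}^N\int_0^t f''(x_i^N(s))\sigma_i^N(x_i^N(s))^2\,ds \\
&\quad - \frac{1}{2N}\sum_{i=1}^N\int_0^t f''(x_i^N(s))H_N(x_i^N(s),x_i^N(s))\,ds \\
&\quad + \frac{N}{2}\int_0^t \iint \frac{f'(x)-f'(y)}{x-y}H_N(x,y)\,L_N(s)(dx)L_N(s)(dy)\,ds,
\end{align*}
where $M_f^N(t)=\frac{1}{N}\sum_i\int_0^t f'(x_i^N(s))\sigma_i^N(x_i^N(s))\,dW_i(s)$. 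The crucial structural difference from the eigenvalue case is that the second-order It\^o term no longer cancels against the diagonal correction from the symmetrization, so both survive and must be analysed separately.

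Next I would specialize to $f(x)=(z-x)^{-1}$, so that $f'(x)=(z-x)^{-2}$ and $f''(x)=2(z-x)^{-3}$, and pass to the limit along a subsequence $N_i$ with $L_{N_i}\to\mu$ in $C([0,T],M_1(\bR))$. The martingale $M_f^N(t)$ vanishes uniformly in $t$ almost surely: its quadratic variation is bounded by $\frac{T}{N}\max_i\|\sigma_i^N/(z-x)^2\|_{L^\infty}^2\le \frac{C_z T}{N}\max_i\|\sigma_i^N/(1+x^2)\|_{L^\infty}^2$, and the summability condition with exponent $l_3$ together with the Burkholder--Davis--Gundy and Markov inequalities (as in \eqref{eq11}) and Borel--Cantelli yields the claim. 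The drift term with $b_N$ and the interaction term with $H_N$ are handled exactly as in Theorem~\ref{thm2}: for the latter I would use the identity $\frac{f'(x)-f'(y)}{x-y}=\frac{1}{(z-x)(z-y)^2}+\frac{1}{(z-x)^2(z-y)}$ and the symmetry of $H_N$ to rewrite $\frac{N}{2}\iint[\cdots]H_N$ as $\iint \frac{NH_N}{(z-x)(z-y)^2}$, and then invoke the uniform convergence $NH_N\to H$, the weak convergence $L_{N_i}\otimes L_{N_i}\to\mu\otimes\mu$, the boundedness of $H/((1+|x|)(1+y^2))$, and dominated convergence in $s$.

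The genuinely new contributions are the two diagonal terms. The correction $\frac{1}{2N}\sum_i\int_0^t f''(x_i^N)H_N(x_i^N,x_i^N)\,ds$ I expect to vanish in the limit: rewriting it as $\frac{1}{2N}\int_0^t \frac{1}{N}\sum_i f''(x_i^N(s))\,NH_N(x_i^N(s),x_i^N(s))\,ds$ and using $NH_N(x,x)\to H(x,x)$ together with the diagonal bound $\|H(x,x)/(1+|x|)^3\|_{L^\infty}<\infty$ (the analogue of Remark~\ref{rmk-2.4}) and the boundedness of $f''(x)(1+x^3)$, the inner average stays bounded in $N$, so the prefactor $\frac{1}{2N}$ forces the term to $0$. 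For the second-order term $\frac{1}{2N}\sum_i\int_0^t f''(x_i^N)\sigma_i^N(x_i^N)^2\,ds$ I would first replace $\sigma_i^N(x)^2$ by $\sigma(x)^2$, controlling the error through \eqref{eq-sigma} and the boundedness of $f''(x)(1+x^3)=2(1+x^3)/(z-x)^3$, and then pass $\frac{1}{2}\langle f''\sigma^2,L_{N_i}(s)\rangle\to\frac{1}{2}\langle f''\sigma^2,\mu_s\rangle$ using the weak convergence together with the continuity of $\sigma$ and the bound $\|\sigma^2/(1+x^3)\|_{L^\infty}<\infty$, which guarantee that $\sigma(x)^2/(z-x)^3$ is bounded and continuous. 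This produces the new third term $\int_0^t\int \sigma(x)^2/(z-x)^3\,\mu_s(dx)\,ds$ in \eqref{eq-19}. The main obstacle I anticipate is precisely the handling of the $i$-dependent coefficient $\sigma_i^N$: one must ensure that the uniform-in-$i$ control provided by \eqref{eq-sigma} (weighted by exactly $1+x^3$ to match $f''$) is strong enough to collapse the average over $i$ onto a single limiting $\sigma(x)^2$, and simultaneously confirm that the unrelated diagonal of $H_N$ is of order $1/N$ and drops out. Once these two points are secured, collecting the surviving terms gives \eqref{eq-19}.
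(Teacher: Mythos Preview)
Your proposal is correct and follows exactly the approach the paper intends: the paper explicitly states that the proof of Theorem~\ref{thm4} is analogous to that of Theorem~\ref{thm2} and omits it, and you have carried out precisely this analogy, correctly isolating the only genuinely new ingredient---the $i$-dependent diffusion coefficients---and handling it via the uniform-in-$i$ approximation \eqref{eq-sigma} to collapse $\sigma_i^N(x)^2$ onto $\sigma(x)^2$ before passing to the weak limit. Your separate treatment of the surviving It\^o second-order term (which produces the $\sigma^2/(z-x)^3$ integral) and the diagonal $H_N$ correction (which vanishes at rate $1/N$) is exactly what is needed.
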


{\red Similar to Corollary \ref{Coro-3.1}, we have the following consequence of Theorem \ref{thm4}.}

{\red
\begin{corollary} \label{Coro-3.2}
Assume that the initial value of \eqref{SDE-particle} satisfies $\lambda_1^N(0) < \cdots < \lambda_N^N(0)$. Suppose that $\sigma_i^N(x)^2$ is Lipschitz continuous for all $1 \le i \le N$ and $H_N(x,y)$ is continuous for all $N \in \mathbb{N}$. Moreover, assume that there exists a positive number $L > 0$, such that , $b_N(x)$ is Lipschitz continuous with the Lipschitz constant $L$ for all $N \in \mathbb{N}$, and
\begin{align*}
	\sup_{N\in\mathbb{N}} \{|b_N(0)|\} \le L.
\end{align*}
Besides, suppose that the conditions (a) - (d) in Corollary \ref{Coro-3.1} hold. Furthermore, assume that there exist continuous functions $b(x)$ and $H(x,y)$, such that $b_N(x)$ converges to $b(x)$ and $N H_N(x,y)$ converges to $H(x,y)$ uniformly as $N$ tends to infinity.

If almost surely, the empirical measure $L_N(0)$ converges weakly as $N$ goes to infinity to a measure $\mu_0$, and the sequence $L_N$ has a limit measure $\mu$ in $C([0,T], M_1(\mathbb{R}))$ for a fixed number $T > 0$, then the measure $\mu$ satisfies the equation
\begin{align} 
	\int \dfrac{\mu_t(dx)}{z-x}
	= \int \dfrac{\mu_0(dx)}{z-x} + \int_{0}^t \left[ \int \dfrac{b(x)}{(z-x)^2} \mu_s(dx) \right] ds + \int_{0}^t \left[ \iint \dfrac{H(x,y)}{(z-x) (z-y)^2} \mu_s(dx) \mu_s(dy) \right] ds,\label{eq-19'}
\end{align}
for $z \in \mathbb{C} \setminus \mathbb{R}$, $t \in [0,T]$.
\end{corollary}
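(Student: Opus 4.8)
The plan is to derive this corollary from Theorem~\ref{thm4}, in exact parallel with the way Corollary~\ref{Coro-3.1} follows from Theorem~\ref{thm3}. The first task is to secure the standing hypothesis of Theorem~\ref{thm4}, namely the existence of a non-exploding, non-colliding strong solution. The assumptions imposed here---Lipschitz continuity of $b_N$ and of each $\sigma_i^N(x)^2$, continuity of $H_N$, the bound $\sup_N|b_N(0)|\le L$, and conditions (a)--(d) of Corollary~\ref{Coro-3.1}---are precisely those under which \cite{Graczyk2014} guarantees that \eqref{SDE-particle} admits, for each fixed $N$, a unique strong solution that is non-exploding and non-colliding on $[0,\infty)$. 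This discharges the standing hypothesis and allows us to apply Theorem~\ref{thm4}, provided its remaining analytic conditions are verified.

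The substance of the proof is the verification of those analytic conditions, and the crucial point is that the limiting diffusion coefficient vanishes identically. First, as in the proof of Corollary~\ref{Coro-2.1}, the Lipschitz bound together with $|b_N(0)|\le L$ gives $|b_N(x)|\le L(1+|x|)$, whence $|b(x)|\le L(1+|x|)$ by uniform convergence, so that $b(x)/(1+x^2)$ is bounded. Condition (a) gives $H_N(x,y)\le c_2(1+|xy|)/N$, hence $|NH_N(x,y)|\le c_2(1+|xy|)$ and, in the uniform limit, $|H(x,y)|\le c_2(1+|xy|)$; the elementary inequalities $|x|/(1+|x|)\le 1$ and $|y|/(1+y^2)\le 1/2$ then bound $H(x,y)/\big((1+|x|)(1+y^2)\big)$.

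The key step concerns the coefficients $\sigma_i^N$. Setting $x=y$ in condition (c) gives $\sigma_i^N(x)^2\le 2H_N(x,x)$, and condition (a) with $y=x$ gives $H_N(x,x)\le c_2(1+x^2)/N$; combining, $\sigma_i^N(x)^2\le 2c_2(1+x^2)/N$ for every $i$. From this single bound all the $\sigma$-hypotheses follow at once. Indeed $\max_i\|\sigma_i^N/(1+x^2)\|_{L^\infty}^2\le 2c_2/N$, so the summability condition holds with any $l_3\ge 1$ because $\sum_N(2c_2/N^2)^{l_3}<\infty$; and $\max_i\|\sigma_i^N(x)^2/(1+|x|^3)\|_{L^\infty}\le (2c_2/N)\,\|(1+x^2)/(1+|x|^3)\|_{L^\infty}\to 0$, so \eqref{eq-sigma} holds with limit function $\sigma\equiv 0$, which trivially satisfies $\|\sigma^2/(1+|x|^3)\|_{L^\infty}<\infty$.

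Having checked every hypothesis, Theorem~\ref{thm4} yields its conclusion \eqref{eq-19}; since $\sigma\equiv 0$ the third term on the right-hand side drops out, leaving exactly \eqref{eq-19'}. I expect no real obstacle beyond this last observation: the essential mechanism is that conditions (a) and (c) force the diffusion coefficients to be of order $N^{-1}$, so that the martingale and second-order terms are negligible in the high-dimensional limit and the entire diffusive contribution disappears. The only routine care needed is with the polynomial weights---in particular reading the weight as $1+|x|^3$, for which $(1+x^2)/(1+|x|^3)$ is genuinely bounded---after which the argument is a direct transcription of the estimates already established for the eigenvalue system.
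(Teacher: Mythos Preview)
Your proposal is correct and follows essentially the same approach as the paper: invoke \cite{Graczyk2014} for the non-exploding, non-colliding strong solution, then verify the hypotheses of Theorem~\ref{thm4} with $\sigma\equiv 0$ using the bounds $|b_N(x)|\le L(1+|x|)$ and $\sigma_i^N(x)^2\le 2c_2(1+x^2)/N$ derived from the Lipschitz condition and conditions~(a),~(c). The paper's own proof is terse and simply asserts that the conditions of Theorem~\ref{thm4} ``can be checked easily'' with $\sigma(x)=0$, whereas you have carried out those checks in detail; your observation that the weight should be read as $1+|x|^3$ is a sensible correction.
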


\begin{proof}
By the proof of Corollary \ref{Coro-3.1}, we have the following estimation
\begin{align*}
	|b_N(x)| \le L(1+|x|), \quad \sigma_i^N(x)^2 \le \dfrac{2c_2}{N} (1+|x|^2).
\end{align*}
Hence, according to \cite{Graczyk2014}, for each $N$, the system \eqref{SDE-particle} has a  unique strong solution that is non-exploding and non-colliding on $[0, \infty)$. It can be checked easily that all the conditions in Theorem \ref{thm4} holds with $\sigma(x) = 0$.
\end{proof}

}

The proofs of Theorems~\ref{thm3} and \ref{thm4}
{\red are analogous to} those of Theorems~\ref{thm1} and \ref{thm2} in
Section~\ref{sec:eig-val}, respectively. They are thus omitted.

\begin{remark}[The normalized case] \label{normalized}

For the  particle system
\begin{align} \label{non-normalized SDE for general particle system}
	dy_i^N(t) = \sigma_i(y_i^N(t)) dW_i(t) + \left( a(y_i^N(t)) + \sum_{j:j\neq i} \dfrac{G(y_i^N(t), y_j^N(t))}{y_i^N(t) - y_j^N(t)} \right) dt, \ 1 \le i \le N, \ t \ge 0,
\end{align}
where $G(x,y)$ is a symmetric function, the normalized particle system
\begin{align*}
	x_i^N(t) = \dfrac{1}{N} y_i^N(t), \ 1 \le i \le N, \ t \ge 0,
\end{align*}
satisfies \eqref{SDE-particle} with
\begin{align*}
	\sigma_i^N (x) = \dfrac{1}{N} \sigma_i(Nx),\
	b_N(x) = \dfrac{1}{N} a(Nx), \text{ and }
	H_N(x,y) = \dfrac{1}{N^2} G(Nx,Ny).
\end{align*}
In this case, if the conditions in Theorem \ref{thm3} and Theorem \ref{thm4} hold,  any limit point $\mu$ of the empirical measures of $\{x_i^N,1\le i\le N\}$ satisfies \eqref{eq-19} with 
\begin{align*}
	\sigma(x)^2 = \lim_{N \rightarrow \infty} \sigma_i^N(x)^2, \
	b(x) = \lim_{N \rightarrow \infty} \dfrac{1}{N} a(Nx), \text{ and }
	H(x,y) = \lim_{N \rightarrow \infty} \dfrac{1}{N} G(Nx,Ny).
\end{align*}
\end{remark}
\medskip

In the rest of this section, we apply the above general results to
{\red general non-colliding squared Bessel particle, general non-colliding squared $\beta$-Bessel particle system,} and Dyson  Brownian motion. \\

{\bf {General non-colliding squared Bessel particle
    system.}} ~{We choose the coefficient functions $g_N(x)$, $h_N(x)$ and $b_N(x)$ {\red and the initial value} in \eqref{SDE-eigenvalue} such that they satisfy the conditions in {\red Corollary \ref{Coro-2.1} and \ref{Coro-2.2}, where $N G_N(x,y) = N (g_N(x)^2 h_N(y)^2 + g_N(y)^2 h_N(x)^2)$ converges to $G(x,y) = x+y$, and $b_N(x)$ converges to $b(x) = c$, uniformly as $N$ tends to infinity.}}
Thus the equation \eqref{eq-2.17'} for the limit measure becomes
\begin{align}
	\int \dfrac{\mu_t(dx)}{z-x}
	= &\int \dfrac{\mu_0(dx)}{z-x} + \int_{0}^t \left[ \int \dfrac{c}{(z-x)^2} \mu_s(dx) \right] ds\notag\\
	 &+ \int_{0}^t \left[ \iint \dfrac{x+y}{(z-x) (z-y)^2} \mu_s(dx) \mu_s(dy) \right] ds.\label{eq-22}
\end{align}
{\red
However, it is challenging to determine the limit measure $\{\mu_t,t \in [0,T]\}$ in general. {If we assume that $\mu_0(dx) = \delta_0(dx)$ and  that}  $\mu_t$ is supported on $[0,\infty)$ for all $t \ge 0$, then \eqref{eq-22} has {a unique} solution as established in \cite{Duvillard2001}. The paper also determined the solution by iterating the equation of the associated characteristic function, {\red for which Gronwall's lemma was employed to deduce the  convergence}.
}


{\red Here we sketch an alternative approach to  find this particular $\{\mu_t,t \in [0,T]\}$. Actually, $\mu_t$ can be considered as the limit of empirical measure of the eigenvalues of $X_t^N=\frac1NB_t^\intercal B_t$ where $B_t$ is a $p\times N$ Brownian matrix. Note that $X_t^N$ and its eigenvalues  solve \eqref{SDE-matrix} and \eqref{SDE-eigenvalue}, respectively, with $g_N(x) = \sqrt{x}/\sqrt{N}$, $h_N(x) = 1$ and $b_N(x) = p/N$. Here, $p > N-1$ and $p/N \rightarrow c \ge 1$.}

{
Denoting {\red the Stieltjes transform of  $\mu_t$ by}
\begin{align}\label{eq-Gt}
	G_t(z) = \int \dfrac{1}{z-x} \mu_t(dx),
\end{align}
 the equation \eqref{eq-22} becomes
\begin{align} \label{Wishart limit equation}
	G_t(z) = G_0(z) - (c-1) \int_{0}^t \partial_z G_s(z) ds - \int_{0}^t \left(G_s(z)^2 + 2z G_s(z) \partial_z G_s(z)\right) ds.
\end{align}
Assume $X_0^N = 0$, and the key observation to solve \eqref{Wishart limit equation} is the following scaling property
\begin{align} \label{scaling property for Wishart}
	G_t(z) = \dfrac{1}{t} G_1 \left( \dfrac{z}{t} \right),
\end{align}
which follows easily from the self-similarity of the process $(B_t^\intercal B_t)_{t\ge0}$.
By \eqref{scaling property for Wishart}, we have 
\begin{align*}
	\partial_z G_t(z)
	= \dfrac{1}{t^2} G_1' \left( \dfrac{z}{t} \right)
	= - \dfrac{1}{z} \dfrac{d}{dt} \left( G_1 \left( \dfrac{z}{t} \right) \right),
\end{align*}
and
\begin{align*}
	G_t(z)^2 + 2z G_t(z) \partial_z G_t(z)
	= \dfrac{1}{t^2} G_1^2 \left( \dfrac{z}{t} \right) + \dfrac{2z}{t^3} G_1 \left( \dfrac{z}{t} \right) G_1' \left( \dfrac{z}{t} \right)
	= - \dfrac{d}{dt} \left( \dfrac{1}{t} G_1^2 \left( \dfrac{z}{t} \right) \right).
\end{align*}
The above two equations and \eqref{Wishart limit equation} imply 
\begin{align}  \label{equation for Wishart stieltjes transform}
	G_t(z)
	&= G_0(z) + \dfrac{c-1}{z} G_1 \left( \dfrac{z}{t} \right) + \dfrac{1}{t} G_1^2 \left( \dfrac{z}{t} \right).
\end{align}
Let $t=1$ in \eqref{equation for Wishart stieltjes transform} and we have
\begin{align} \label{equation of G_1 Wishart}
	z G_1^2(z) + (c-1-z) G_1(z) + 1 = 0,
\end{align}
 of which the solution is
\begin{align} \label{solution of G_1 Wishart}
	G_1(z) = \dfrac{(z+1-c) - \sqrt{(c-1-z)^2 - 4z}}{2z},
\end{align}
where the square root maps from $\mathbb{C}_{+}$ to $\mathbb{C}_{+}$.  Thus by \eqref{scaling property for Wishart},
}
\begin{align} \label{Wishart Stieltjes transform}
	G_t(z) = \dfrac{(z+t(1-c)) - \sqrt{(z+t(1-c))^2 - 4tz}}{2tz}.
\end{align}



%
\begin{remark}
{\red  The matrix process}
\begin{align*}
	\tilde{X}^N(t) = \dfrac{1}{p} B_t^\intercal B_t = \dfrac{N}{p} X^N(t)
\end{align*}
{\red often appears in the literature. We take the notation
$	\tilde{c} = \lim_{N \to \infty} \frac{N}{p} = \frac{1}{c} \le 1. $
 Let  $\tilde{\mu}_t$ be the limit of the empirical measure of $\tilde{X}^N(t)$, and 
denote its Stieltjes transform by
\begin{align*}
	\tilde G_{t}(z) = \int \dfrac{1}{x-z} \tilde{\mu}_t(dx).
	\end{align*}}
Noting that $\tilde{X}^N(t)$ and $X^N(t)$ only differ by a multiple of $N/p$, we also have
$	\tilde{\lambda}_i^N(t) = \frac{N}{p} \lambda_i^N(t)$ {\red and  it is easy to verify that}
\begin{align*}
	\tilde G_{t}(z)=-c G_t(cz).
\end{align*}
Letting $t=1$, we have by \eqref{solution of G_1 Wishart},
\begin{align*}
	\tilde G_1(z)=-cG_1(cz)= \frac{1-\tilde c-z+\sqrt{(1-\tilde c-z)^2-4{\tilde c}^2z}}{2\tilde c z}
\end{align*}
which {\red is the Stieltjes transform of the} \emph{standard}  Mar\v{c}enko-Pastur law with parameter
$\tilde{c} \le 1$ (see, e.g., Equation (3.1.1) in \cite{BSbook}).
\end{remark}

{\bf  {General non-colliding squared $\beta$-Bessel particle system}.} ~
This process is a slight generalization of non-colliding squared Bessel particle system. {We choose the coefficient functions $\sigma_i^N(x), b_N(x), H_N(x,y)$ in \eqref{SDE-particle} such that {\red they satisfy the conditions in Corollary \ref{Coro-3.1} and Corollary \ref{Coro-3.2}, where $b_N(x)$ converges to $b(x) = \beta c$, and $NH_N(x,y)$ converges to $H(x,y) = \beta (x+y)$, uniformly as $N$ tends to infinity, and $\sigma(x) = 0$.} Then the equation \eqref{eq-19'} now is
\begin{align*}
	\int \dfrac{\mu_t(dx)}{z-x}
	&= \int \dfrac{\mu_0(dx)}{z-x} + \beta \int_{0}^t \left[ \int \dfrac{c}{(z-x)^2} \mu_s(dx) \right] ds \nonumber \\
	&\quad + \beta \int_{0}^t \left[ \iint \dfrac{x+y}{(z-x) (z-y)^2} \mu_s(dx) \mu_s(dy) \right] ds,
\end{align*}
which is equivalent to
\begin{align} \label{beta Wishart limit point equation}
	G_t(z) = G_0(z) - \beta (c-1) \int_{0}^t \partial_z G_s(z) ds - \beta \int_{0}^t \left(G_s(z)^2 + 2z G_s(z) \partial_z G_s(z)\right) ds,
\end{align}
where $G_t(z)$ is the Stieltjes transform defined in \eqref{eq-Gt}.


{\red Similar to general non-colliding squared Bessel particle system case, we consider the system of SDEs \eqref{SDE-particle} with $\mu_0(dx) = \delta_0(x)$, $\sigma_i^N(x) = \sqrt{x}/\sqrt{N}$, $H_N(x,y) = \beta (x+y)/N$ and $b_N(x) = b_N$, where $\{b_N, N\in\mathbb N\}$ is a sequence of positive numbers that converges to $\beta c$. By the uniqueness of the solution to \eqref{SDE-particle} and the self-similarity of Brownian motion, we can still obtain the scaling property \eqref{scaling property for Wishart} for $G_t(z)$. Thus, similar to the transformation from \eqref{Wishart limit equation} into \eqref{equation for Wishart stieltjes transform}, \eqref{beta Wishart limit point equation} now is transformed into
\begin{align*}
	G_t(z) = G_0(z) + \dfrac{\beta (c-1)}{z} G_1 \left( \dfrac{z}{t} \right) + \dfrac{\beta}{t} G_1^2 \left( \dfrac{z}{t} \right).
\end{align*}
Letting $t=1$, it is easy to get
\begin{align*}
	G_1(z) = \dfrac{z - \beta(c-1) - \sqrt{[\beta(c-1) - z]^2 - 4\beta z}}{2\beta z}.
\end{align*}
Hence, by \eqref{scaling property for Wishart} ,
\begin{align} \label{beta-Wishart Stieltjes transform}
	G_t(z) = \dfrac{z - \beta t(c-1) - \sqrt{[\beta t(c-1) - z]^2 - 4\beta t z}}{2\beta t z}.
\end{align}
In other words, $\mu_t$ is the celebrated Mar\v{c}enko-Pastur
  law with parameters $(1/c, c\beta t)$.}

\begin{remark}
If we take $\sigma_i(x) = 2 \sqrt{x}$, $a(x) = \beta \alpha$, $G(x,y) = \beta (x+y)$ in \eqref{non-normalized SDE for general particle system} with $\alpha/N \rightarrow c$, the equation becomes
\begin{align} \label{general beta Wishart SDE}
	dy_i^N(t) = 2 \sqrt{y_i^N(t)} dW_i(t) + \beta \left( \alpha + \sum_{j:j\neq i} \dfrac{y_i^N(t) + y_j^N(t)}{y_i^N(t) - y_j^N(t)} \right) dt.
\end{align}
This is the eigenvalue process of the classical $\beta$-Laguerre processes that are studied in \cite{Demni2007} and \cite{Konig2001}. As discussed in Remark \ref{normalized}, the corresponding normalized particle equation is \eqref{SDE-particle} with coefficient functions $\sigma_i^N(x) = 2 \sqrt{x/N}$, $b_N(x) = \beta \alpha / N$ and $H_N(x,y) = \beta (x+y) / N$.
\end{remark}

{\bf {General} Dyson  Brownian motion.}
~{{}}{We choose the coefficient functions $g_N(x)$, $h_N(x)$ and $b_N(x)$ and initial value in \eqref{SDE-eigenvalue} such that they satisfy the conditions in {\red Corollary \ref{Coro-2.1} and Corollary \ref{Coro-2.2}, where $N G_N(x,y) = N (g_N(x)^2 h_N(y)^2 + g_N(y)^2 h_N(x)^2)$ converges to $G(x,y) = 1$, and $b_N(x)$ converges to $b(x) = 0$, uniformly as $N$ tends to infinity.}}

Similar to the {examples above}, \eqref{eq-2.17'} can be simplified as
\begin{align} \label{limit point equation for Dyson}
	G_t(z) = G_0(z) - \int_{0}^t G_s(z) \partial_z G_s(z) ds,
\end{align}
which was shown in \cite{Anderson2009}.

{\red
Now we consider the system of SDEs \eqref{SDE-eigenvalue} with $\mu_0(dx) = \delta_0(dx)$, $g_N(x) = (2N)^{-1/2}$, $h_N(x) = 1$ and $b_N(x) = b_N$, where $\{b_N, N \in \mathbb{N}\}$ is a sequence of positive numbers that converges to $0$. {Thanks to the} uniqueness of the solution to \eqref{SDE-eigenvalue} and the self-similarity of Brownian motion, we can obtain the following scaling property
}

\begin{align}\label{eq-32}
	G_t(z) = \dfrac{1}{\sqrt{t}} G_1 \left( \dfrac{z}{\sqrt{t}} \right).
\end{align}
Thus, \eqref{limit point equation for Dyson} can be transformed to
\begin{align*}
	G_t(z) = G_0(z) + \dfrac{1}{z} G_1^2 \left( \dfrac{z}{\sqrt{t}} \right).
\end{align*}
 When $t=1$,  we have 
\begin{align*}
	G_1(z) = \dfrac{z - \sqrt{z^2-4}}{2},
\end{align*}
which is the Stieltjes transform of the semicircle law. Finally, it follows from the scaling property \eqref{eq-32}  that
\begin{align} \label{solution for G_t Dyson}
	G_t(z) = \dfrac{z - \sqrt{z^2-4t}}{2t},
\end{align}
is the Stieltjes transform of a limit measure, which is also a solution to \eqref{limit point equation for Dyson}.  This  yields the
uniqueness of the limit measure of $L_N$. Note that in \cite{Anderson2009},  the uniqueness of the limit measure was obtained from the uniqueness of the solution to the equation \eqref{limit point equation for Dyson}.

{
\begin{remark}
The symmetric Brownian motion is obtained by taking $g_N(x) = (2N)^{-1/2}$, $h_N(x) = 1$ and $b_N(x) = 0$ in \eqref{SDE-matrix} and the solution of the corresponding eigenvalue SDEs \eqref{SDE-eigenvalue} is the classical Dyson Brownian motion. 
\end{remark}
}

\section{Conditions for existence and uniqueness of the solutions to particle systems} \label{sec:existence} 		  

We stress that the results of large-$N$ limit  in Sections~\ref{sec:eig-val} and \ref{sec:particles} were obtained  under the assumption that the eigenvalue SDEs \eqref{SDE-eigenvalue} and \eqref{SDE-particle} have solutions
(before colliding/exploding).  { Also note that \cite{Graczyk2013,Graczyk2014} imposed  conditions to guarantee the existence and uniqueness of such solutions.}

 In this section, we {{}}{provide a new set of} conditions
 for the existence {{}}{and uniqueness} of
 {{}}{strong} solutions to \eqref{SDE-eigenvalue} and
 \eqref{SDE-particle}. {{}}{Throughout this section, the
   dimension $N$ is fixed and we remove $N$ in subscripts/superscritps.} 

As \eqref{SDE-eigenvalue} is a special case of
\eqref{SDE-particle}, we consider the latter only: for $1\le i\le N$ and $t\ge 0$, 
\begin{equation} \label{SDE-particle-G}
\begin{cases}  &dx_i = \sigma_i(x_i) dW_i(t) + \left( b_i(x_i) + \sum\limits_{j:j\neq i} \dfrac{H_{ij}(x_i, x_j)}{x_i - x_j} \right) dt, \\\
  &x_1(0)<\cdots<x_N(0),
  \end{cases}
\end{equation}
where $(W_i)_{1 \le i \le N}$ are independent Brownian motions. 
In \cite{Graczyk2014},  the existence and strong uniqueness of the
system~\eqref{SDE-particle-G} were established under the following conditions:
\begin{itemize}
\item[(G1)] The functions $\sigma_i$ are  continuous. Besides, there exists a function $\rho : \mathbb{R}_+ \rightarrow \mathbb{R}_+$, such that for any $\varepsilon>0$
  \[
	 \int_0^\varepsilon \rho^{-1}(x) dx = \infty,
  \]
  and  for all $x,y \in \mathbb{R}$ and $ 1 \le i \le N$,
  \[
    |\sigma_i(x) - \sigma_i(y)|^2 \le \rho(|x-y|).
  \]
  
\item[(G2)] The functions $b_i$ and $H_{ij}$ are continuous for all $1
  \le i,j \le N$ and $i \neq j$. {{}}{The functions $H_{ij}$ are non-negative and symmetric, i.e. $H_{ij}(x,y) = H_{ji}(y,x)$}

\end{itemize}

{{}}{Now, we define, for $n\in\mathbb N$, $-\infty \le A < B \le +\infty$},
\begin{align*}
  D^n = \left\{ (x_1, \cdots, x_N) : -\infty<A_n<x_1<\cdots<x_N<B_n<\infty, ~x_{i+1} - x_i > \dfrac{1}{n} \text{ for } 1 \le i \le N-1 \right\},
\end{align*}
{{}}{with $A_n \searrow A$, $B_n \nearrow B$} and define
\begin{align*}
  D = \{ (x_1, \cdots, x_N) : {{}}{A <} x_1 < \cdots < x_N {{}}{< B} \}.
\end{align*}
Then $\overline{D^n} \subseteq D^{n+1}$ and $\bigcup_n D^n = D$.
We impose the following  conditions on the coefficient functions:

\begin{itemize}
\item[(E)] The functions $\sigma_i$ are in $ C^1({(A,B)})$
  and {\red strictly positive on $(A,B)$};
\item[(F)] For each $n \in \mathbb{N}$,  there exists  a number $p
  = p(n) > N$ {\red such that the functions  $b_i(x)$ are in $L^p(A_n, B_n)$ for $1\le i\le N$ and
  $H_{jk}(x,y)$ belongs to $ L^p(\{(x,y|A_n < x < y < B_n, y-x \ge \dfrac{1}{n})\})$ for $1 \le j < k \le N$.}
\end{itemize}

Note that  condition (G1) is not a consequence of condition (E) (consider, e.g.,  $\sigma_i(x) = x^2+1$), and condition (G2) clearly implies condition (F).

\begin{theorem}\label{thm1b}
{{}}{Suppose that the initial value $(x_1(0), \ldots, x_N(0)) \in D$.} Under the conditions (E) and (F), the system of SDEs  \eqref{SDE-particle-G} has a unique strong solution up to the first exit time {{}}{$\tau$ from} D, {{}}{which is defined as follows
\begin{align*}
	\tau = \inf_{t \ge 0}\Bigg\{ (x_1(t), \ldots, x_N(t)) \notin D \Bigg\}.
\end{align*}
}
\end{theorem}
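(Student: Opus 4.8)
The plan is to prove existence and strong uniqueness by a localization argument that reduces the problem on $D$ to a sequence of standard stochastic differential equations on $\bR^N$ with merely $L^p$-integrable drift and a smooth, uniformly nondegenerate diffusion matrix. On the exhausting domains $D^n$ the singular interaction terms $H_{ij}(x_i,x_j)/(x_i-x_j)$ become bounded in a suitable integral sense, and the exponent condition $p(n)>N$ in (F) is exactly the Krylov--R\"{o}ckner threshold for strong well-posedness of time-homogeneous SDEs in dimension $N$.

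First I would fix $n$ and work on $D^n$. Since $(A_n,B_n)$ has compact closure inside $(A,B)$ and each $\sigma_i\in C^1((A,B))$ is strictly positive by (E), the diagonal diffusion matrix $a(x)=\mathrm{diag}(\sigma_1(x_1)^2,\dots,\sigma_N(x_N)^2)$ is Lipschitz and uniformly elliptic on $\overline{D^n}$; I would extend each $\sigma_i$ to a globally bounded, Lipschitz, strictly positive function $\tilde\sigma_i$ on $\bR$. For the drift, note that on $D^n$ one has $|x_i-x_j|>1/n$ for all $i\ne j$ (the consecutive gaps exceed $1/n$, hence all gaps do), so $|1/(x_i-x_j)|\le n$ there. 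Using that $D^n\subset(A_n,B_n)^N$ is bounded, that $b_i\in L^p(A_n,B_n)$, and that $H_{jk}\in L^p$ on $\{A_n<x<y<B_n,\ y-x\ge 1/n\}$, each summand of the drift
\[
B_i(x)=b_i(x_i)+\sum_{j:j\ne i}\frac{H_{ij}(x_i,x_j)}{x_i-x_j}
\]
depends on at most two coordinates and is $p$-integrable over $D^n$ after integrating out the remaining bounded coordinates; summing the finitely many terms gives $B=(B_1,\dots,B_N)\in L^p(D^n)$ with $p>N$. I would then extend $B$ by a smooth cutoff outside a neighbourhood of $\overline{D^n}$ to obtain $\tilde B\in L^p(\bR^N)$ agreeing with $B$ on $D^n$.

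Next I would invoke the theory of SDEs with singular drift (Zvonkin's transform, in the forms of Veretennikov and Krylov--R\"{o}ckner): for $N$-dimensional equations $dX=\tilde\sigma(X)\,dW+\tilde B(X)\,dt$ with $\tilde\sigma$ bounded, Lipschitz and uniformly nondegenerate and $\tilde B\in L^p(\bR^N)$, $p>N$, there is a pathwise-unique strong solution. Let $X^{(n)}$ denote this solution started at $(x_1(0),\dots,x_N(0))$, which lies in $D^n$ for $n$ large since $D=\bigcup_n D^n$, and let $\tau_n$ be its first exit time from $D^n$. Because $\tilde\sigma$ and $\tilde B$ coincide with the original coefficients of \eqref{SDE-particle-G} on $D^n$, the stopped process $X^{(n)}_{\cdot\wedge\tau_n}$ solves \eqref{SDE-particle-G} up to $\tau_n$, and strong uniqueness on $\bR^N$ forces pathwise uniqueness of \eqref{SDE-particle-G} before $\tau_n$.

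Finally I would patch the local solutions. Since $\overline{D^n}\subset D^{n+1}$ and the coefficients of the $n$-th and $(n+1)$-th problems agree on $D^n$, uniqueness gives $X^{(n+1)}_{\cdot\wedge\tau_n}=X^{(n)}$, so $\tau_n$ is nondecreasing and the $X^{(n)}$ are consistent; setting $\tau=\sup_n\tau_n$ and $X_t=X^{(n)}_t$ for $t<\tau_n$ yields a strong solution on $[0,\tau)$. As $\bigcup_n D^n=D$, every $t<\tau$ has $X_t\in D$, and $\tau$ is precisely the first exit time from $D$; pathwise uniqueness before $\tau$ follows by restricting any two solutions to $[0,\tau_n]$ and using uniqueness on $D^n$. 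The hard part will be the middle step: verifying cleanly that the localized drift lands in $L^p(\bR^N)$ with the correct exponent $p>N$ --- this is where the gap bound $|x_i-x_j|>1/n$ and condition (F) are essential --- and then correctly transferring the $\bR^N$ well-posedness back to the domain-restricted problem via stopping, so that the extension of the coefficients outside $D^n$ is immaterial before $\tau_n$.
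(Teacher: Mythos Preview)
Your strategy is correct and leads to the same conclusion, but the implementation differs from the paper's in one notable way. The paper exploits the diagonal structure of the diffusion: since each $\sigma_i\in C^1((A,B))$ is strictly positive, it sets $f_i'(x)=1/\sigma_i(x)$ and performs a component-wise Lamperti change of variables $y_i=f_i(x_i)$. By It\^o's formula this turns \eqref{SDE-particle-G} into an SDE with \emph{additive} noise $dy_i=dW_i+(\text{drift})\,dt$, after which the paper applies the Krylov result (stated here as Theorem~\ref{lem:Krylov}) directly on the exhausted open set $Q=\bigcup_n Q^n$ with $Q^n=(0,n)\times F(D^n)$. The required $L^p$-bound on the transformed drift follows from (E) and (F) by the change-of-variables formula, and the cited theorem already delivers existence and uniqueness up to the exit time from $Q$, so no explicit patching is needed.

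Your route keeps the multiplicative noise, extends the coefficients globally, and invokes a Krylov--R\"ockner/Veretennikov type result for nondegenerate Lipschitz diffusion with $L^p$ drift, followed by a standard localization/patching argument. This is legitimate, but note two points. First, the specific Krylov theorem quoted in the paper is for the equation $dx=b\,dt+dw$ with identity diffusion; for your version you must cite an extension that allows a bounded, Lipschitz, uniformly elliptic $\sigma$ together with $L^p$ drift (such results exist, e.g.\ via Zvonkin's transform, but Veretennikov's classical theorem assumes bounded drift, so be precise about the reference). Second, the paper's change of variables makes the subsequent analysis more self-contained and eliminates both the global extension of $\sigma$ and the explicit consistency/patching step, since the exhaustion is built into Theorem~\ref{lem:Krylov}. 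Your verification that the localized drift lies in $L^p(D^n)$ is essentially the same computation the paper does after pulling back by $F^{-1}$.
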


The proof of Theorem~\ref{thm1b} relies on the following result due to  \citet{Krylov2005}.
\begin{theorem}\label{lem:Krylov} 
    Consider  the SDE 
  \begin{align} \label{singular drift SDE}
	x_t = x_0 + \int_0^t b(s+r, x_r) dr + w_t, \quad  t \ge 0,
  \end{align}
  where $w_t$ is a Brownian motion and $b(t,x)$  a $\mathbb{R}^d$-valued Borel function on an open
  set $Q \subseteq \mathbb{R} \times \mathbb{R}^d$.   Let $Q^n, ~n \ge 1$ be bounded open subsets of $Q$, such that $\overline{Q^n} \subseteq Q^{n+1}$ and $\bigcup_n Q^n = Q$. Suppose that for each $n \in \mathbb{N}^+$, there exist $p = p(n) \ge 2$ and $q = q(n) > 2$ satisfying
  \begin{align*}
    \dfrac{d}{p} + \dfrac{2}{q} < 1,
  \end{align*}
  and
  \[
    \left\|   \| b(t,x) I_{Q^n}(t,x)\|_{ L^p(dx)} \right\|_{   L^q(dt)} <  \infty .
  \]
  Then there exists a unique strong solution up to the first exit time,
  say $\tau$, 
  from $Q$. Moreover this solution satisfies 
  \begin{align*}
	\int_0^t |b(s+r, x_r)|^2 dr < \infty
  \end{align*}
  for $t <\tau$ almost surely.
\end{theorem}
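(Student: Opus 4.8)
The plan is to reduce \eqref{SDE-particle-G} to the additive-noise SDE \eqref{singular drift SDE} treated in Theorem \ref{lem:Krylov} by a coordinate-wise Lamperti transform, and then to verify the mixed $L^p$--$L^q$ integrability hypothesis of that theorem on the exhausting family $D^n$. Since condition (E) provides that each $\sigma_i$ is $C^1$ and strictly positive on $(A,B)$, I would set $\phi_i'(x) = 1/\sigma_i(x)$ and let $y_i = \phi_i(x_i)$; each $\phi_i$ is then a strictly increasing $C^2$ diffeomorphism of $(A,B)$, so $\Phi(x) := (\phi_1(x_1),\dots,\phi_N(x_N))$ is a homeomorphism of $D$ onto $\tilde D := \Phi(D)$ carrying each $D^n$ to $\tilde D^n := \Phi(D^n)$ with $\overline{\tilde D^n} \subseteq \tilde D^{n+1}$ and $\bigcup_n \tilde D^n = \tilde D$.

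Applying It\^o's formula and using $\phi_i'\sigma_i \equiv 1$ together with $\tfrac12\phi_i''\sigma_i^2 = -\tfrac12\sigma_i'$, the transformed system becomes
\begin{align*}
  dy_i = \tilde b_i(y)\,dt + dW_i, \quad 1 \le i \le N,
\end{align*}
where, with $x = \Phi^{-1}(y)$,
\begin{align*}
  \tilde b_i(y) = \phi_i'(x_i)\Bigl( b_i(x_i) + \sum_{j:j\neq i} \frac{H_{ij}(x_i,x_j)}{x_i - x_j} \Bigr) - \frac{1}{2}\sigma_i'(x_i).
\end{align*}
This is of the form \eqref{singular drift SDE} with dimension $d = N$, time-homogeneous drift $b(t,y) = \tilde b(y)$, open set $Q = \bR \times \tilde D$, and bounded exhausting sets $Q^n = (-n,n)\times \tilde D^n$.

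The core step is to check that $\tilde b \in L^p(\tilde D^n)$ for the exponent $p = p(n) > N$ supplied by condition (F); the time-homogeneity then renders the outer $L^q(dt)$ norm finite for every $q$, and since $N/p < 1$ I would fix $q$ so large that $N/p + 2/q < 1$ (note $p > N$ forces $p \ge 2$ once $N \ge 2$). On $\overline{D^n}$ the coordinates lie in $[A_n,B_n]$ with every gap at least $1/n$; hence $\phi_i' = 1/\sigma_i$ and $\sigma_i'$ are bounded by (E), $1/|x_i - x_j| \le n$, and the Jacobian $\prod_i \phi_i'(x_i)$ of $\Phi^{-1}$ is bounded above and below, so $L^p$-integrability on $\tilde D^n$ is equivalent to $L^p$-integrability on $D^n$. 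The drift correction $\sigma_i'$ is continuous hence bounded, the term $\phi_i' b_i$ is controlled by $b_i \in L^p(A_n,B_n)$ from (F), and each singular interaction $\phi_i'(x_i) H_{ij}(x_i,x_j)/(x_i-x_j)$ is dominated by $n\,|H_{ij}(x_i,x_j)|$ with $H_{ij}$ lying in $L^p$ on $\{A_n < x < y < B_n,\ y-x \ge 1/n\}$ again by (F); integrating this two-variable $L^p$ function over the remaining bounded coordinates of $D^n$ keeps it in $L^p(D^n)$. I expect this control of the singular interaction terms to be the main obstacle, as it is exactly where (E) and (F) must be combined: the lower gap bound on $D^n$ tames $1/(x_i-x_j)$, while the $L^p$ hypothesis on $H_{jk}$ absorbs the remaining singularity of $H$.

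With the hypotheses of Theorem \ref{lem:Krylov} verified, that theorem yields a unique strong solution $(y_t)$ of the transformed system up to its first exit time from $Q = \bR \times \tilde D$, which, since the drift is time-homogeneous, is the first exit time of $(y_t)$ from $\tilde D$. Setting $x_i = \phi_i^{-1}(y_i)$ recovers a solution of \eqref{SDE-particle-G}; since $\Phi$ is a homeomorphism of $D$ onto $\tilde D$, the exit time of $(x_t)$ from $D$ coincides with that of $(y_t)$ from $\tilde D$, i.e.\ with $\tau$. Finally, because $\Phi$ and $\Phi^{-1}$ are $C^1$ with everywhere positive coordinate derivatives, strong existence and pathwise uniqueness transfer between the two systems, delivering the unique strong solution of \eqref{SDE-particle-G} up to $\tau$ and completing the proof.
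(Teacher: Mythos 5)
There is a fundamental mismatch here: your proposal does not prove the statement at all. The statement in question is the Krylov--R\"ockner type theorem (Theorem \ref{lem:Krylov}) on strong solvability of the additive-noise SDE \eqref{singular drift SDE} with a singular, merely $L^p$--$L^q$ integrable drift, up to the first exit time from $Q$. What you have written is instead a proof of Theorem \ref{thm1b} (existence and uniqueness for the particle system \eqref{SDE-particle-G} under conditions (E) and (F)), and your argument \emph{invokes} Theorem \ref{lem:Krylov} as a black box. As a proof of Theorem \ref{lem:Krylov} this is circular: the very result to be established is assumed in the first sentence of your plan. Note that the paper itself offers no proof of this statement either --- it is quoted as a known result due to Krylov (the paper's citation \citet{Krylov2005}), and the paper's own Lamperti-transform argument, which your text closely mirrors (the substitution $f_i'(x)=1/\sigma_i(x)$, the change of variables, the verification that the transformed drift lies in $L^p(Q^n)$), is the paper's proof of Theorem \ref{thm1b}, not of Theorem \ref{lem:Krylov}.

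An actual proof of the stated theorem requires entirely different machinery, none of which appears in your proposal: Krylov-type estimates for the distributions of semimartingales, solvability of the associated backward parabolic equation with drift in mixed $L^p(dx)$--$L^q(dt)$ norms under the parabolic scaling condition $d/p + 2/q < 1$, a Zvonkin-type transformation removing the singular drift, Yamada--Watanabe to pass from weak existence plus pathwise uniqueness to strong solvability, and a localization argument via stopping times to glue the solutions on the bounded sets $Q^n$ into a solution up to the exit time $\tau$ from $Q$. If the intent was to verify the statement by reduction to something else, there is nothing more elementary to reduce it to; if the intent was to prove Theorem \ref{thm1b}, then the target was misidentified --- your argument for that theorem is essentially the paper's own, but it cannot serve as a proof of the statement actually posed.
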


\begin{proof}{(of Theorem~\ref{thm1b}.)}\quad 
  By condition (E), for $1 \le i \le N$, there exist $f_i(x) \in C^2({{}}{(A,B)})$ satisfying
  $	f_i'(x) = {1}/{\sigma_i(x)}.$
  Besides, $f_i(x)$ is increasing so it is invertible and the inverse is in $C^2({{}}{(f_i(A),f_i(B))})$.
  For $1 \le i \le N$, let $y_i = f_i(x_i)$.  By It\^{o} formula,
  \begin{align} \label{SDE change variable}
	dy_i
	&= f_i'(x_i) dx_i + \dfrac{1}{2} f_i''(x_i) d\langle x_i \rangle \nonumber \\
	&= f_i'(x_i) \sigma_i(x_i) dW_i + f_i'(x_i) \left( b_i(x_i) + \sum_{j:j\neq i} \dfrac{H_{ij}(x_i, x_j)}{x_i - x_j} \right) dt + \dfrac{1}{2} f_i''(x_i) \sigma_i(x_i)^2 dt \nonumber \\
	&= dW_i + \dfrac{1}{\sigma_i(x_i)} \left( b_i(x_i) + \sum_{j:j\neq i} \dfrac{H_{ij}(x_i, x_j)}{x_i - x_j} \right) dt - \dfrac{1}{2} ({\sigma_i}(x_i))' dt \nonumber \\
	&= dW_i + \dfrac{1}{\sigma_i(f_i^{-1}(y_i))} \left( b_i(f_i^{-1}(y_i)) + \sum_{j:j\neq i} \dfrac{H_{ij}(f_i^{-1}(y_i), f_j^{-1}(y_j))}{f_i^{-1}(y_i) - f_j^{-1}(y_j)} \right) dt \nonumber \\
	&\qquad - \dfrac{1}{2} (\sigma_i(f_i^{-1}(y_i)))' dt.
  \end{align}
  Introduce  the map
  \begin{align*}
	F : \quad {{}}{(A,B)}^N \quad &\longrightarrow {{}}{(f_1(A),f_1(B)) \times \cdots \times (f_N(A),f_N(B))}, \\
	(x_1, \cdots, x_N) &\longmapsto \qquad (f_1(x_1), \cdots, f_N(x_N)).
  \end{align*}
  Then $F$ is bijective, both $F$ and $F^{-1}$ being {\red twice continuously differentiable}.	 Then the system of SDEs \eqref{SDE change variable} on $F(D)$ is equivalent to the the system of SDEs \eqref{SDE-particle-G} on $D$.

  Let  $Q = \mathbb{R}_+ \times F(D)$ and $Q^n = (0,n) \times
  F(D^n)$. In order to apply Theorem~\ref{lem:Krylov},  we only need to verify that the following functions are in $L^p(Q^n)$ for some $p = p(n) > N$:
\begin{align*}
	\dfrac{b_i(f_i^{-1}(y_i))}{\sigma_i(f_i^{-1}(y_i))}, \
	\dfrac{1}{\sigma_i(f_i^{-1}(y_i))} \dfrac{H_{ij}(f_i^{-1}(y_i), f_j^{-1}(y_j))}{f_i^{-1}(y_i) - f_j^{-1}(y_j)}, \text{ and } 
	(\sigma_i(f_i^{-1}(y_i)))'.
\end{align*}
 By change of variables, it is equivalent to show that the functions
\begin{align*}
	\left( \dfrac{b_i(x_i)}{\sigma_i(x_i)} \right)^p \dfrac{1}{\sigma_i(x_i)},\
	\left( \dfrac{1}{\sigma_i(x_i)} \dfrac{H_{ij}(x_i, x_j)}{x_i - x_j} \right)^p \dfrac{1}{\sigma_i(x_i) \sigma_j(x_j)},\text{ and }
	\dfrac{((\sigma_i(x_i))')^p}{\sigma_i(x_i)} 
\end{align*}
belong to $L^1(D^n)$, which is a direct consequence of Conditions (E) and (F). 

The proof is concluded. 
\end{proof}

\begin{remark} \label{rmk-4.1}
{Note that theorem \ref{thm1b} is valid for  Dyson Brownian motion, non-colliding square Bessel process and non-colliding squared $\beta$-Bessel particle system. Indeed, for the Dyson Brownian motion, $\sigma_i(x) = (2N)^{-1/2}$, $b_i(x) = 0$ and $H_{ij}(x,y) = 1/N$, which satisfy the conditions (E) and (F) with $A = - \infty$ and $B = + \infty$. For the non-colliding square Bessel process, $\sigma_i(x) = 2 \sqrt{x}/\sqrt{N}$, $b_i(x) = p/N$ and $H_{ij}(x,y) = (x+y)/N$, which satisfy the conditions (E) and (F) with $A = 0$ and $B = + \infty$. For the non-colliding squared $\beta$-Bessel particle system, $\sigma_i(x) = 2 \sqrt{x/N}$, $b_i(x) = \beta \alpha / N$ and $H_{ij}(x,y) = \beta (x+y) / N$, which also satisfy the conditions (E) and (F) with $A = 0$ and $B = + \infty$. In the non-colliding square Bessel process case and the non-colliding squared $\beta$-Bessel particle system case, the first exit time $\tau$ is the first time the particles explode, collide or reach zero.}

Furthermore, Theorem \ref{thm1b}  also applies to  {\red the particle system \eqref{SDE-particle-G} with discontinuous coefficient functions $b_i(x)$ and $H_{i,j}(x,y)$. For instance, it applies to the system  with  $ \sigma_i(x) = (2N)^{-1/2}$, $b_i(x) = \frac1N f(x)$ and $H_{ij}(x,y) = \frac1Ng(x,y)$ where $f$ and $g$ are bounded measurable functions.
}
\end{remark}

{\red
Combining Theorem \ref{thm1b} with Theorem \ref{thm3} and Theorem \ref{thm4} which are obtained in Section \ref{sec:particles}, we have the following two corollaries for the particle system \eqref{SDE-particle}, in which now the continuity of the coefficient functions $b_N(x)$ and $H_N(x,y)$ is not required.

\begin{corollary} \label{Coro-4.1}
For the system of SDEs \eqref{SDE-particle}, assume that the initial value satisfies $\lambda_1^N(0) < \cdots < \lambda_N^N(0)$ and condition (C') holds. Suppose that for each $N \in \mathbb{N}$, $\sigma_i^N(x)$ are in $C^1(\bR)$ and strictly positive for $1 \le i \le N$ and $b_N(x)$ is non-decreasing (or Lipschitz continuous). Moreover, we assume that there exist positive constants $c_1$, $c_2$ that does not depend on $N$ and positive constants $c_3(N)$ and $c_4(N)$, such that
\begin{enumerate}
	\item [(a')] $|b_N(x)| \le c_1 \sqrt{1+|x|^2}$, $\forall x \in \bR$;
	\item [(b')] $H_N(x,y) \le \frac{c_2}{N} (1+|xy|)$, $\forall x,y \in \bR$;		\item[(c')] $\sigma_i^N(x)^2 + \sigma_i^N(y)^2 \le c_3(N) (x-y)^2 + 4H_N(x,y)$, $\forall x, y \in \bR$, $\forall x,y \in \bR$;
	\item[(d')] $H_N(x,y)(y-x) + H_N(y,z)(z-y) \le c_4(N) (z-y)(z-x)(y-x) + H_N(x,z)(z-x)$, $\forall x < y < z$.
\end{enumerate}

Then for any fixed number $T > 0$, the sequence $\{L_N(t), t\in[0,T]\}_{N\in\mathbb N}$ is relatively compact in $C([0,T], M_1(\mathbb{R}))$ almost surely.
\end{corollary}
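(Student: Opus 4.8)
The plan is to obtain the relative compactness as a direct consequence of Theorem~\ref{thm3}, so the argument naturally splits into two tasks: first, to produce a strong solution of \eqref{SDE-particle} that is non-exploding and non-colliding on $[0,T]$; and second, to verify that conditions (A'), (B') and (D') hold, condition (C') being part of the hypotheses. The novelty here is that $b_N$ and $H_N$ are only assumed measurable, so the well-posedness cannot be imported from \cite{Graczyk2014}. Instead I would extract existence and uniqueness from Theorem~\ref{thm1b} and then upgrade it to a solution defined on all of $[0,T]$ by Lyapunov estimates that rely only on the structural inequalities (a')--(d') and not on any continuity.

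For the well-posedness, I would take $A=-\infty$, $B=+\infty$, so that $D=\{x_1<\cdots<x_N\}$ and the first exit time $\tau$ from $D$ is exactly the first collision-or-explosion time. Condition (E) is immediate, since each $\sigma_i^N\in C^1(\bR)$ is strictly positive. For condition (F), a non-decreasing (or Lipschitz) $b_N$ is bounded on every bounded interval and hence lies in $L^p(A_n,B_n)$ for all $p$, while (b') gives $H_N(x,y)\le \frac{c_2}{N}(1+|xy|)$, which is bounded on $\{A_n<x<y<B_n\}$ and therefore in $L^p$. Theorem~\ref{thm1b} then yields a unique strong solution up to $\tau$.

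The crux is to show $\tau>T$ almost surely, i.e. non-collision and non-explosion. For non-collision I would apply It\^o's formula, localized at the exit times from the sets $D^n$, to $R_t=-\sum_{i<j}\log\bigl(x_j^N(t)-x_i^N(t)\bigr)$. The singular self-interaction of a pair $(i,j)$ contributes to the drift a term of the form $\frac{\sigma_i^N(x_i)^2+\sigma_j^N(x_j)^2-4H_N(x_i,x_j)}{2(x_j-x_i)^2}$, which condition (c') bounds above by a constant; the three-body interactions, after symmetrizing over triples, are controlled by (d'); and the contribution of $b_N$ equals $-\sum_{i<j}\frac{b_N(x_j)-b_N(x_i)}{x_j-x_i}$, which is nonpositive when $b_N$ is non-decreasing and bounded when $b_N$ is Lipschitz. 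Hence the drift of $R_t$ is bounded above, $R_t$ cannot reach $+\infty$ in finite time, and no collision occurs before $T$. For non-explosion I would use $V_t=\sum_i (x_i^N(t))^2$: symmetrizing the singular drift collapses it to $\sum_{i<j}2H_N(x_i,x_j)$, controlled by (b'); the term $\sum_i 2x_i b_N(x_i)$ is controlled by (a') via $|x|\sqrt{1+x^2}\le 1+x^2$; and $\sum_i\sigma_i^N(x_i)^2\le \sum_i 2H_N(x_i,x_i)$ by (c') with $x=y$. These estimates give $\frac{d}{dt}\mathbb{E}[V_{t\wedge\tau_n}]\le C\bigl(1+\mathbb{E}[V_{t\wedge\tau_n}]\bigr)$, and Gronwall's lemma bounds $\mathbb{E}[V_t]$ on $[0,T]$, precluding explosion. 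This Lyapunov bookkeeping, and in particular the triple-sum estimate through (d'), is the main obstacle; it amounts to re-running the non-collision argument of \cite{Graczyk2014} for merely measurable drifts, which is legitimate because the It\^o computations use only the pointwise inequalities (a')--(d').

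Finally, with $\varphi(x)=\ln(1+x^2)$ I would check (A'), (B') and (D'). Condition (c') with $x=y$ together with (b') gives $\sigma_i^N(x)^2\le \frac{2c_2}{N}(1+x^2)$; since $\varphi'$ and $\varphi''(1+x^2)$ are bounded and $|b_N(x)|\le c_1\sqrt{1+x^2}$ by (a'), the quantities $\varphi' b_N$ and $\varphi''(\sigma_i^N)^2$ are uniformly bounded, and the identity $\frac{\varphi'(x)-\varphi'(y)}{x-y}=\frac{2(1-xy)}{(1+x^2)(1+y^2)}$ together with $(1+|xy|)^2\le(1+x^2)(1+y^2)$ makes $N H_N(x,y)\frac{\varphi'(x)-\varphi'(y)}{x-y}$ bounded, giving (B'). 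Moreover $\max_{i}\|\varphi'\sigma_i^N\|_{L^\infty}^2/N=O(N^{-2})$, so the series in (A') converges for any $l_1$. Choosing for $\{\tilde f_k\}$ any sequence of compactly supported $C^2(\bR)$ functions dense in $C_0(\bR)$ likewise yields $\max_{i}\|\tilde f_k'\sigma_i^N\|_{L^\infty}^2/N=O(N^{-2})$, so (D') holds with $l_2=2$. With (C') assumed, Theorem~\ref{thm3} applies and delivers the claimed relative compactness.
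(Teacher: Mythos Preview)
Your proposal is correct and follows essentially the same route as the paper: invoke Theorem~\ref{thm1b} for existence and uniqueness up to the exit time, rule out collision and explosion via the structural inequalities (a')--(d'), then verify (A'), (B'), (D') with $\varphi(x)=\ln(1+x^2)$ and apply Theorem~\ref{thm3}. The only difference is presentational: where you carry out the Lyapunov computations for $R_t=-\sum_{i<j}\log(x_j-x_i)$ and $V_t=\sum_i x_i^2$ explicitly, the paper simply cites \cite[Propositions~3.4 and 4.2]{Graczyk2014} and remarks that their proofs use only the pointwise inequalities and not the continuity of $b_N$ or $H_N$---which is exactly the observation underlying your ``re-running the argument for merely measurable drifts''.
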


\begin{proof}
It is obvious that conditions (a') and (b') imply condition (F). Thus, by Theorem \eqref{thm1b},  SDEs \eqref{SDE-particle} has a unique strong solution. Conditions (a'), (b') and (c') allow to apply \cite[Proposition 3.4]{Graczyk2014}, and hence the solution is non-exploding.  Moreover, conditions on $b_N$ and conditions (c') and (d') imply the non-collision of the solution by \cite[Proposition 4.2]{Graczyk2014}. Note that the continuity of the coefficient functions is not involved in the proofs of \cite[Proposition 3.4 and Proposition 4.2]{Graczyk2014}.

Finally, it is easy to check that conditions (A') - (D') in Section \ref{sec:particles} are satisfied with $\varphi(x) = \ln (1+x^2)$, and  the conclusion follows from Theorem \ref{thm3}.
\end{proof}

The following result is a direct consequence of Corollary \ref{Coro-4.1} and Theorem \ref{thm4}.

\begin{corollary} \label{Coro-4.2}
For the system of SDEs \eqref{SDE-particle}, assume that all the conditions in Corollary \ref{Coro-4.1} hold. Besides, suppose there exist continuous functions $b(x)$ and $H(x,y)$, such that $b_N(x)$ converges to $b(x)$ and $N H_N(x,y)$ converges to $H(x,y)$ uniformly as $N$ tends to infinity. If the empirical measure $L_N(0)$ converges weakly as $N$ goes to infinity to a measure $\mu_0$ almost surely, and the sequence $L_N$ has a limit measure $\mu$ in $C([0,T], M_1(\mathbb{R}))$ for a fixed number $T > 0$, then the measure $\mu$ satisfies the equation
\begin{align} 
	\int \dfrac{\mu_t(dx)}{z-x}
	= \int \dfrac{\mu_0(dx)}{z-x} + \int_{0}^t \left[ \int \dfrac{b(x)}{(z-x)^2} \mu_s(dx) \right] ds + \int_{0}^t \left[ \iint \dfrac{H(x,y)}{(z-x) (z-y)^2} \mu_s(dx) \mu_s(dy) \right] ds,
\end{align}
for $z \in \mathbb{C} \setminus \mathbb{R}$, $t \in [0,T]$.
\end{corollary}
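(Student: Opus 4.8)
I would prove Corollary \ref{Coro-4.2} by combining Corollary \ref{Coro-4.1} with Theorem \ref{thm4}, so that the whole argument reduces to verifying the remaining quantitative hypotheses of Theorem \ref{thm4} and then invoking it with the degenerate choice $\sigma\equiv 0$.

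First I would invoke Corollary \ref{Coro-4.1}: since all of its hypotheses are assumed here, it already delivers, for each $N$, a unique strong solution of \eqref{SDE-particle} that is non-exploding and non-colliding on $[0,T]$ (through Theorem \ref{thm1b} together with Propositions 3.4 and 4.2 of \cite{Graczyk2014}), and it guarantees that the family $\{L_N\}$ is relatively compact in $C([0,T],M_1(\mathbb{R}))$. This settles the standing existence assumption of Theorem \ref{thm4}, and what is left is to check its growth and convergence conditions on $\sigma_i^N$, $b_N$ and $H_N$.

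The key observation, and the one piece of genuine work, is a uniform $O(1/N)$ bound on the diffusion coefficients. Setting $y=x$ in condition (c') gives $2\sigma_i^N(x)^2 \le 4H_N(x,x)$, and condition (b') then yields
\[
	\sigma_i^N(x)^2 \le 2H_N(x,x) \le \frac{2c_2}{N}\bigl(1+x^2\bigr)
\]
uniformly in $i$ and $x$. From this both $\sigma$-conditions of Theorem \ref{thm4} follow with $\sigma\equiv 0$: the bound gives $\max_{i}\|\sigma_i^N/(1+x^2)\|_{L^\infty}^2 \le 2c_2/N$, whence $\tfrac1N\max_i\|\sigma_i^N/(1+x^2)\|_{L^\infty}^2 \le 2c_2/N^2$ and the associated $l_3$-series converges for every $l_3\ge 1$; and the same bound makes the left-hand side of \eqref{eq-sigma} tend to zero with the limit $\sigma\equiv 0$. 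The limiting boundedness of $b/(1+x^2)$ and of $H/\bigl((1+|x|)(1+y^2)\bigr)$ is then inherited from (a') and (b') upon letting $b_N\to b$ and $NH_N\to H$ uniformly, while the boundedness condition on $\sigma$ is trivial.

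The remaining hypotheses of Theorem \ref{thm4} --- the almost sure weak convergence $L_N(0)\to\mu_0$, the uniform convergences $b_N\to b$ and $NH_N\to H$ to continuous limits, and the existence of a limit measure $\mu$ --- are exactly the assumptions of the corollary. Hence all conditions of Theorem \ref{thm4} hold with $\sigma\equiv 0$; the term $\int_0^t\!\int \sigma(x)^2(z-x)^{-3}\,\mu_s(dx)\,ds$ appearing in \eqref{eq-19} drops out, and the claimed identity follows. I expect the only delicate point to be justifying the degenerate limit $\sigma\equiv 0$ even though condition (E) forces each $\sigma_i^N$ to be strictly positive; this is resolved precisely by the $O(1/N)$ decay above, which drives the diffusive contribution to zero in the high-dimensional limit.
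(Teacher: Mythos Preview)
Your proposal is correct and follows exactly the route the paper intends: the paper states that Corollary~\ref{Coro-4.2} is ``a direct consequence of Corollary~\ref{Coro-4.1} and Theorem~\ref{thm4}'', and your argument supplies precisely those details, including the key estimate $\sigma_i^N(x)^2 \le 2H_N(x,x) \le \tfrac{2c_2}{N}(1+x^2)$ obtained from (b') and (c'), which forces $\sigma\equiv 0$ in the limit. This is also the pattern used in the paper's proof of the analogous Corollary~\ref{Coro-3.2}.
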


}




\section{Discussion on the equation \eqref{limit point equation 4.3.25} of the limit measure}\label{sec: discussion}

Consider the equation \eqref{limit point equation 4.3.25}  of limit measure
\begin{align*}
	\int \dfrac{\mu_t(dx)}{z-x}
	=& \int \dfrac{\mu_0(dx)}{z-x} + \int_{0}^t \left[ \int \dfrac{b(x)}{(z-x)^2} \mu_s(dx) \right] ds\notag \\
	&+ \int_{0}^t \left[ \iint \dfrac{G(x,y)}{(z-x) (z-y)^2} \mu_s(dx) \mu_s(dy) \right] ds, ~~ \text{ for } z \in \mathbb{C} \setminus \mathbb{R}.
\end{align*}
The uniqueness of the limit measure $\mu_t(dx)$ is obtained so far  only for some special cases in Section \ref{sec:particles} by solving  \eqref{limit point equation 4.3.25} directly with the help of the scaling property \eqref{scaling property for Wishart}. For general cases, the uniqueness is still unknown.  

 In this section, we further explore equation \eqref{limit point
   equation 4.3.25} assuming self-similarity on the eigenvalues
 $\lambda_i^N(t)$, which hopefully may shed some light on solving the
 issue of the uniqueness of  the limit measure.

Recalling that $G(x,y)$ is the limit of $N G_N(x,y)$ where $G_N(x,y)$ takes the form of \eqref{eq-Gn},  we  assume that $G(x,y) = g^2(x) h^2(y) + g^2(y) h^2(x)$, and then \eqref{limit point equation 4.3.25} becomes
\begin{align} \label{limit point differential equation}
	\partial_t \int \dfrac{\mu_t(dx)}{z-x}
	=& \int \dfrac{b(x)}{(z-x)^2} \mu_t(dx) + \int \dfrac{g^2(x)}{z-x} \mu_t(dx) \int \dfrac{h^2(x)}{(z-x)^2} \mu_t(dx) \nonumber \\
	&\quad+ \int \dfrac{h^2(x)}{z-x} \mu_t(dx) \int \dfrac{g^2(x)}{(z-x)^2} \mu_t(dx).
\end{align}

Suppose that the self-similarity $\lambda_i^N(t) \overset d= t^{\alpha} \lambda_i^N(1)$ holds for some constant $\alpha$, then for any $\varphi \in C_b(\mathbb{R})$
\begin{align}\label{eq-5.2}
	&\int  \varphi(x) \mu_t(dx)
	= \lim_{N_j \rightarrow \infty} \dfrac{1}{N_j} \sum_{i=1}^{N_j} \varphi(\lambda_i^{N_j}(t))\notag\\
	=& \lim_{N_j \rightarrow \infty} \dfrac{1}{N_j} \sum_{i=1}^{N_j} \varphi(t^{\alpha} \lambda_i^{N_j}(1))
	= \int  \varphi(t^{\alpha} x) \mu_1(dx).
\end{align}
Hence,  applying \eqref{eq-5.2}  to $\varphi(x) = (z-x)^{-1}$ and  $\varphi(x) = x(z-x)^{-2}$, we have
\begin{align*}
	&\partial_t \int \dfrac{\mu_t(dx)}{z-x}
	= \partial_t \int \dfrac{\mu_1(dx)}{z - t^{\alpha} x} 	= \int \dfrac{\alpha t^{\alpha-1} x}{(z - t^{\alpha} x)^2} \mu_1(dx) \\
	&= \dfrac{\alpha}{t} \int \dfrac{t^{\alpha} x}{(z - t^{\alpha} x)^2} \mu_1(dx) = \dfrac{\alpha}{t} \int \dfrac{x}{(z-x)^2} \mu_t(dx) \\
	&= - \dfrac{\alpha}{t} \partial_z \int \dfrac{x}{z-x} \mu_t(dx).
\end{align*}
Furthermore, we also have
\begin{align*}
	&\quad \int \dfrac{g^2(x)}{z-x} \mu_t(dx) \int \dfrac{h^2(x)}{(z-x)^2} \mu_t(dx) + \int \dfrac{h^2(x)}{z-x} \mu_t(dx) \int \dfrac{g^2(x)}{(z-x)^2} \mu_t(dx) \\
	&= - \int \dfrac{g^2(x)}{z-x} \mu_t(dx) \partial_z \int \dfrac{h^2(x)}{z-x} \mu_t(dx) - \int \dfrac{h^2(x)}{z-x} \mu_t(dx) \partial_z \int \dfrac{g^2(x)}{z-x} \mu_t(dx) \\
	&= - \partial_z \left[ \int \dfrac{g^2(x)}{z-x} \mu_t(dx) \int \dfrac{h^2(x)}{z-x} \mu_t(dx) \right],
\end{align*}
and 
\begin{align*}
	\int \dfrac{b(x)}{(z-x)^2} \mu_t(dx)
	= - \partial_z \int \dfrac{b(x)}{z-x} \mu_t(dx).
\end{align*}
Thus, \eqref{limit point differential equation} can be simplified as
\begin{align*}
 \dfrac{\alpha}{t} \int \dfrac{x}{z-x} \mu_t(dx)
= \int \dfrac{b(x)}{z-x} \mu_t(dx) + \int \dfrac{g^2(x)}{z-x} \mu_t(dx) \int \dfrac{h^2(x)}{z-x} \mu_t(dx) + C(t),
\end{align*}
where $C(t)$ is a complex constant independent of $z$. Let $|z|
\rightarrow \infty$.  By dominated convergence theorem, we can see that $C(t) \equiv 0$. 

Thus, for $G(x,y) = g^2(x) h^2(y) + g^2(y) h^2(x)$, assuming self-similarity on $\lambda_i^N(t)$, the equation \eqref{limit point equation 4.3.25}  for limit measure $\mu_t(dx)$ becomes
\begin{align} \label{measure equation complex case}
	\dfrac{\alpha}{t} \int \dfrac{x}{z-x} \mu_t(dx)
	= \int \dfrac{b(x)}{z-x} \mu_t(dx) + \int \dfrac{g^2(x)}{z-x} \mu_t(dx) \int \dfrac{h^2(x)}{z-x} \mu_t(dx).
\end{align}

In particular, when $b(x), g^2(x)$ and $h^2(x)$ are polynomial functions (consider, for example, Bru's Wishart process, $\beta$-Wishart process, and Dyson Brownian motion), the above equation can be simplified to a polynomial equation only involving the variable $z$ and the Stieltjes transform $\int \frac{1}{z-x}\mu_t(dx)$ of the limit measure $\mu_t(dx)$. 
 
We also would like to point out that  equation \eqref{measure equation complex case} can be represented via the Hilbert transform, in light of the following lemma (see, e.g., Section 3.1 in \cite{stein2011functional}). 
\begin{lemma}\label{lem-5.1} For $\varphi \in L^2(\mathbb{R})$, in the $L^2(\mathbb{R})$-norm we have
\begin{align*}
	\lim_{v \rightarrow 0^+} \int \dfrac{\varphi(x)}{z-x} dx = - 2 \pi i P(\varphi)(u),
\end{align*}
where $z = u + iv$, and the projective operator $P = (I + i H) / 2$ with $H$ being the Hilbert transform operator.
\end{lemma}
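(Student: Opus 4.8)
The plan is to prove this via the Fourier transform and Plancherel's theorem, which converts the $L^2$ convergence into a routine dominated-convergence argument on the frequency side. Writing $z = u + iv$ with $v>0$ and setting $k_v(t) = (t+iv)^{-1}$, the left-hand side is the convolution
\begin{align*}
	\int \dfrac{\varphi(x)}{z-x}\,dx = \int \dfrac{\varphi(x)}{(u-x)+iv}\,dx = (\varphi * k_v)(u).
\end{align*}
First I would compute the Fourier transform of $k_v$ by contour integration: since $k_v$ has its single pole at $t=-iv$ in the lower half-plane, closing the contour in the half-plane dictated by the sign of the frequency $\xi$ yields
\begin{align*}
	\widehat{k_v}(\xi) = -2\pi i\, e^{-\xi v}\,\mathbf{1}_{\{\xi>0\}}.
\end{align*}

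Next I would identify the Fourier multiplier of $P$. With the convention $H\varphi(u) = \pi^{-1}\,\mathrm{p.v.}\int \varphi(x)/(u-x)\,dx$, the Hilbert transform has multiplier $-i\,\mathrm{sgn}(\xi)$, so $P = (I + iH)/2$ has multiplier $\tfrac12\bigl(1 + \mathrm{sgn}(\xi)\bigr) = \mathbf{1}_{\{\xi>0\}}$; that is, $\widehat{P\varphi}(\xi) = \mathbf{1}_{\{\xi>0\}}\,\widehat\varphi(\xi)$, so $P$ is the Riesz projection onto positive frequencies (equivalently, the orthogonal projection onto the Hardy space of the upper half-plane). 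The convolution theorem then gives $\widehat{\varphi * k_v}(\xi) = -2\pi i\, e^{-\xi v}\,\mathbf{1}_{\{\xi>0\}}\,\widehat\varphi(\xi)$, and by Plancherel's theorem,
\begin{align*}
	\bigl\| \varphi * k_v + 2\pi i\, P\varphi\bigr\|_{L^2(\mathbb{R})}^2 = c\int_0^\infty \bigl(1 - e^{-\xi v}\bigr)^2 |\widehat\varphi(\xi)|^2\,d\xi,
\end{align*}
where $c$ is the positive normalization constant attached to the chosen Fourier convention. Since $\bigl(1 - e^{-\xi v}\bigr)^2 \le 1$ on $\{\xi>0\}$ and tends to $0$ pointwise as $v\to 0^+$, with the integrable dominating function $|\widehat\varphi|^2$, the dominated convergence theorem forces this quantity to vanish as $v\to 0^+$, which is exactly the asserted $L^2$ limit.

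An alternative, purely real-variable route rests on the Sokhotski--Plemelj decomposition
\begin{align*}
	k_v(u-x) = \dfrac{u-x}{(u-x)^2+v^2} - i\,\dfrac{v}{(u-x)^2+v^2}:
\end{align*}
the real part is a truncation of the Hilbert-transform kernel converging in $L^2$ to $\pi H\varphi$, while the imaginary part is $-i\pi$ times the Poisson integral $P_v * \varphi$, an approximate identity converging in $L^2$ to $-i\pi\varphi$, and combining gives $\pi H\varphi - i\pi\varphi = -2\pi i\, P\varphi$. The main obstacle here is entirely the mode of convergence: because the principal-value (Hilbert) part is only conditionally defined and must be controlled \emph{simultaneously} with the approximate-identity part, the interchange of the limit $v\to 0^+$ with integration is delicate in the real-variable picture, whereas the Fourier route sidesteps this by diagonalizing both operators at once. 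I would therefore carry out the Fourier proof in detail and merely record the Sokhotski--Plemelj formula as the underlying interpretation; all of these facts are standard and may be found in Section~3.1 of \cite{stein2011functional}.
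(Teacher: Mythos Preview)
Your argument is correct. The Fourier-side computation is the standard route: writing the Cauchy integral as $\varphi * k_v$ with $k_v(t)=(t+iv)^{-1}\in L^2(\mathbb{R})$, identifying $\widehat{k_v}(\xi)=-2\pi i\,e^{-\xi v}\mathbf{1}_{\{\xi>0\}}$, recognising $P$ as the Riesz projection with multiplier $\mathbf{1}_{\{\xi>0\}}$, and finishing with Plancherel plus dominated convergence. The only point I would ask you to make explicit is that $k_v\notin L^1(\mathbb{R})$, so the contour-integration computation of $\widehat{k_v}$ is to be understood in the $L^2$ (or tempered-distribution) sense; a one-line remark suffices.

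As for comparison with the paper: the paper does not supply its own proof of this lemma. It is stated with the parenthetical ``(see, e.g., Section~3.1 in \cite{stein2011functional})'' and used immediately afterwards, so there is nothing to compare against beyond the fact that you cite the same source. Your write-up therefore goes beyond what the paper provides, and the Fourier-multiplier proof you give is exactly the argument one finds in that reference. Your closing Sokhotski--Plemelj paragraph is a nice gloss but, as you say, unnecessary for the formal proof.
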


Assume that $\mu_t(dx)=p_t(x)dx$ is absolutely continuous with respect
to the Lebesgue measure.  Applying Lemma \ref{lem-5.1} to \eqref{measure equation complex case}, we have the following equation for the density function $p_t(x)$,
\begin{align*}
	\dfrac{\alpha}{t} (I + i H)(x p_t(x))
	= (I + i H)(b(x) p_t(x)) - \pi i (I + i H)(g^2(x) p_t(x)) (I + i H)(h^2(x) p_t(x)).
\end{align*}
The imaginary part of the equation is
\begin{align*}
	H \left( \left(\dfrac{\alpha}{t} x-b(x) \right) p_t(x) \right)
	= -\pi g^2(x) h^2(x) p_t^2(x) + \pi H(g^2(x) p_t(x)) H(h^2(x) p_t(x)),
\end{align*}
which is equivalent to the real part, noting that $H^2=-I$,
\begin{align*}
	\left(\dfrac{\alpha}{t} x-b(x) \right) p_t(x)
	= \pi g^2(x) p_t(x) H(h^2(x) p_t(x)) + \pi h^2(x) p_t(x) H(g^2(x) p_t(x)).
\end{align*}

\section*{Acknowledgment}

{\red
  The authors are grateful to the editor and a referee for numerous {valuable and detailed} comments.  Addressing these comments {has} led to {a significant improvement}  of the paper.
}


\bibliography{WishartProcess}

\end{document}